\newtheorem{theorem}{Theorem}[section]
\newtheorem*{theorem*}{Theorem}
\newtheorem{lemma}[theorem]{Lemma}
\newtheorem{fact}[theorem]{Fact}
\newtheorem*{comment*}{Comment}
\newtheorem*{definition*}{Definition}
\newtheorem*{remark*}{Remark}
\newtheorem*{observation*}{Observation}
\newtheorem*{assumption*}{Assumption}
\theoremstyle{definition}
\theoremstyle{remark}
\newcommand{\C}{\mathbb{C}}
\newcommand{\Conf}{\mathrm{Conf}}
\begin{document}

\title[Truncations of Hua-Pickrell unitary matrices]{Truncations of random unitary matrices drawn from Hua-Pickrell distribution}

\author
{Zhaofeng Lin}
\address{Zhaofeng Lin: Shanghai Center for Mathematical Sciences, Fudan University, Shanghai, 200438, China.}
\email{zflin18@fudan.edu.cn}

\author
{Yanqi Qiu}
\address
{Yanqi QIU: School of Mathematics and Statistics, Wuhan University, Wuhan 430072, Hubei, China; Institute of Mathematics \& Hua Loo-Keng Key Laboratory of Mathematics, AMSS, CAS, Beijing 100190, China.}
\email{yanqi.qiu@hotmail.com}

\author
{Kai Wang}
\address{Kai WANG: School of Mathematical Sciences, Fudan University, Shanghai, 200433, China.}
\email{kwang@fudan.edu.cn}

\thanks{Y.Qiu is supported by grants NSFC Y7116335K1,  NSFC 11801547 and NSFC 11688101. K. Wang is supported by grants  NSFC (12026250, 11722102) and the Shanghai Technology Innovation Project (21JC1400800).}

\begin{abstract}
Let $U$ be a random unitary matrix drawn from the Hua-Pickrell distribution $\mu_{\mathrm{U}(n+m)}^{(\delta)}$ on the unitary group $\mathrm{U}(n+m)$. We show that the eigenvalues of the truncated unitary matrix $[U_{i,j}]_{1\leq i,j\leq n}$ form a determinantal point process $\mathscr{X}_n^{(m,\delta)}$ on the unit disc $\mathbb{D}$ for any $\delta\in\mathbb{C}$ satisfying $\mathrm{Re}\,\delta>-1/2$.

We also prove that the limiting point process taken by $n\to\infty$ of the determinantal point process $\mathscr{X}_n^{(m,\delta)}$ is always $\mathscr{X}^{[m]}$, independent of $\delta$. Here $\mathscr{X}^{[m]}$ is the determinantal point process on $\mathbb{D}$ with weighted Bergman kernel
\begin{equation*}
	\begin{split}
		K^{[m]}(z,w)=\frac{1}{(1-z\overline w)^{m+1}}
	\end{split}
\end{equation*}
with respect to the reference measure $d\mu^{[m]}(z)=\frac{m}{\pi}(1-|z|)^{m-1}d\sigma(z)$, where $d\sigma(z)$ is the Lebesgue measure on $\mathbb{D}$.
\end{abstract}

\subjclass[2010]{Primary 60G55; Secondary 46E22, 30B20, 30H20.}

\keywords{determinantal point process; Hua-Pickrell measure; truncated unitary matrix; limiting point process; weighted Bergman kernel.}

\maketitle

\section{Introduction}

Determinantal point processes, which arise  in
quantum physics,  have been studied extensively after being initiated by Macchi \cite{Ma} in the seventies. They have been used to model  fermions in quantum mechanics, eigenvalues and singular  values distribution of  random matrices,  zero sets of random analytic functions, and many other objects in representation theory and combinatorics.  All of them can be described by   probabilistic models   that give the
likelihood  as    matrix determinants of   kernel functions.
We refer the reader to \cite{Bo, HKPV, PV, So, ST} for further background of determinantal point processes.

\subsection{Truncations of Haar unitary matrices}\label{Truncations of Haar unitary matrices}
By the theorem established   by Macchi \cite{Ma} and Soshnikov \cite{So}, as well as Shirai and Takahashi \cite{ST}, one  knows   how to determine whether  a reproducing kernel function  $K(\cdot,\cdot)$ yields a  determinantal point process. However, it is still a hard problem to illustrate a concrete  determinantal point process for a given kernel function, even for the classical weighted  Bergman kernel.

Let $\mathscr{X}^{[m]}$ be the  determinantal point process on the unit disc $\mathbb{D}$ with weighted Bergman kernel  \begin{equation*}
	\begin{split}
		K ^{[m]}(z,w)=\frac{1}{(1-z\overline w)^{m+1}}
	\end{split}
\end{equation*}
with respect to the reference measure
\begin{equation*}
	\begin{split}
		d\mu^{[m]}(z)=\frac{m}{\pi}(1-|z|^2)^{m-1}d\sigma(z),
	\end{split}
\end{equation*}
where $d\sigma(z)$ is the usual Lebesgue measure. 
In a breakthrough work \cite{PV}, Peres and Vir\'ag  established a concrete model  for  $\mathscr{X}^{[1]}$ through the random analytic function theory. Namely, the zeros of the random analytic function
\begin{equation*}
	\begin{split}
		f(z)=a_0+a_1z+a_2z^2+\cdots,
	\end{split}
\end{equation*}
where $a_k$, $k\geq0$, are i.i.d. standard complex Gaussian random variables, form a determinantal point process on $\mathbb{D}$ with Bergman kernel
\begin{equation*}
	\begin{split}
		K^{[1]}=\frac{1}{(1-z\overline w)^2}
	\end{split}
\end{equation*}
with respect to the normalized Lebesgue measure $d\mu^{[1]}(z)=\frac{1}{\pi}d\sigma(z)$.

Krishnapur \cite{HKPV, Kri} extended the result of Peres and Vir\'ag to general positive integer $m$, which is exactly the determinantal point process $\mathscr{X}^{[m]}$. In detail, let $G_k$, $k\geq0$, be i.i.d. $m\times m$ matrices  with i.i.d. standard complex Gaussian entries. Then the zeros of the random analytic function
\begin{equation}\label{F-Krish}
		F(z)=\det(G_0+G_1z+G_2z^2+\cdots)
\end{equation}
form a determinantal point process on $\mathbb{D}$ with the weighted Bergman kernel $K^{[m]}$ with respect to the reference measure $\mu^{[m]}$.

  The  truncated unitary matrix theory plays an important role  in Krishnapur's work.
 Let $U$ be a random unitary matrix drawn from the Haar distribution $\mu_{\mathrm{U}(n+m)}$ on unitary group $\mathrm{U}(n+m)$ for some fixed  positive integers  $n,m$. 
 Zyczkowski and Sommers \cite{ZS} showed that the eigenvalues distribution  $\mathscr{X}_n^{[m]}$ of the truncated unitary matrix $[U_{i,j}]_{1\leq i,j\leq n}$ form a determinantal point process on the unit disc $\mathbb{D}$ with kernel
\begin{equation*}
	\begin{split}
		K_n^{[m]}(z,w)=\sum_{k=0}^{n-1}\frac{(m+1)(m+2)\cdots(m+k)}{k\,!}(z\overline w)^k
	\end{split}
\end{equation*}
with respect to the reference measure $d\mu^{[m]}$. Krishnapur obtained the distribution of the zeros of the random analytic function $F$ in \eqref{F-Krish} by  establishing a  remarkable link between his model and Zyczkowski and Sommers' result.

\subsection{  Hua-Pickrell measure on unitary group} Let $\mu_{\mathrm{U}(N)}$ be
the Haar measure   on the unitary group $\mathrm{U}(N)$ for $N\in\mathbb{N_+}$; see e.g. \cite{Ha}. The celebrated theorem by Dyson and Weyl in \cite{Dy, Dy1} state that  the eigenvalues of unitary matrix drawn from the Haar distribution $\mu_{\mathrm{U}(N)}$ on $\mathrm{U}(N)$ form a determinantal point process, called   circular unitary ensemble,   on the unit circle $\mathbb{T}$ with kernel
\begin{equation*}
	\begin{split}
		K_N(e^{i\theta},e^{i\varphi})=\sum_{k=0}^{N-1}e^{ik(\theta-\varphi)}
	\end{split}
\end{equation*}
with respect to the normalized Lebesgue measure $\frac{1}{2\pi}d\theta$. We refer the reader to Pereira's short note \cite{Pe}
for  a quick survey for  more Hermitian   random matrix model, and Hedenmalm and Wennman's recent work \cite{HW} for non-Hermitian cases.

For any fixed $\delta\in\mathbb{C}$ with $\mathrm{Re}\,\delta>-1/2$, the Hua-Pickrell measure $\mu_{\mathrm{U}(N)}^{(\delta)}$ on $\mathrm{U}(N)$ is defined by
\begin{equation*}
	\begin{split}
		\mathbb{E}_{\mu_{\mathrm{U}(N)}^{(\delta)}}\big[f(U)\big]=\frac{\mathbb{E}_{\mu_{\mathrm{U}(N)}}\big[|\det(I-U)^{\delta}|^2f(U)\big]}{\mathbb{E}_{\mu_{\mathrm{U}(N)}}\big[|\det(I-U)^{\delta}|^2\big]}
	\end{split}
\end{equation*}
for any continuous function $f\in C(\mathrm{U}(N))$. In other words, the Hua-Pickrell measure $\mu_{\mathrm{U}(N)}^{(\delta)}$ is a probability measure on $\mathrm{U}(N)$ satisfying
\begin{equation*}
	\begin{split}
		d\mu_{\mathrm{U}(N)}^{(\delta)}(U)\,\propto\,\big|\det(I-U)^{\delta}\big|^2d\mu_{\mathrm{U}(N)}(U).
	\end{split}
\end{equation*}
When $\delta=0$, $\mu_{\mathrm{U}(N)}^{(\delta)}$ is just the Haar measure $\mu_{\mathrm{U}(N)}$. Here, the notation  $\mu \propto \nu$ for two measure $\mu,\nu$  means that there exists a constant $c$ such that $\mu=c \nu$.

The Hua-Pickrell measure has already been studied on the finite dimensional unitary group by Hua \cite{Hu}. And results about the infinite dimensional case were given by Pickrell \cite{Pi, Pi1}. It has been widely studied in recent years; see, e.g. \cite{Bou, BO, BNR, FW, Ne, Ol}.

Similarly, the eigenvalues of unitary matrix drawn from the Hua-Pickrell distribution $\mu_{\mathrm{U}(N)}^{(\delta)}$ on $\mathrm{U}(N)$ also form a determinantal point process on the unit circle $\mathbb{T}$, in which  the kernel $K_N^{(\delta)}$ involves  Gauss's hypergeometric functions \cite{Bou, BNR}.

\subsection{Main results}
We consider the point process for eigenvalues of the truncated unitary matrix drawn from the Hua-Pickrell distribution $\mu_{\mathrm{U}(n+m)}^{(\delta)}$ on unitary group $\mathrm{U}(n+m)$, where $n,m\in\mathbb{N_+}$ and $\delta\in\mathbb{C}$ satisfying $\mathrm{Re}\,\delta>-1/2$. Let
\begin{equation*}
	\begin{split}
		d\mu^{(m,\delta)}(z)=|(1-z)^{\delta}|^2(1-|z|^2)^{m-1}d\sigma(z)
	\end{split}
\end{equation*}
be the reference measure. Denote $\{P_k^{(m,\delta)}\}_{k=0}^{\infty}$ the family of orthonormal polynomials obtained by applying Gram-Schmidt orthogonalization procedure to $\{z^k\}_{k=0}^{\infty}$ in $L^2(\mathbb{D},\mu^{(m,\delta)})$.

\begin{theorem}\label{thm-Eigenvalues of truncations}
	Let $U$ be a random unitary matrix drawn from the Hua-Pickrell distribution $\mu_{\mathrm{U}(n+m)}^{(\delta)}$ on unitary group $\mathrm{U}(n+m)$. Then the eigenvalues of the truncated unitary matrix $[U_{i,j}]_{1\leq i,j\leq n}$ form a determinantal point process $\mathscr{X}_n^{(m,\delta)}$ on the unit disc $\mathbb{D}$ with kernel
	\begin{equation*}
		\begin{split}
			K_n^{(m,\delta)}(z,w)=\sum_{k=0}^{n-1}P_k^{(m,\delta)}(z)\overline{P_k^{(m,\delta)}(w)}
		\end{split}
	\end{equation*}
	with respect to the reference measure $\mu^{(m,\delta)}$.
\end{theorem}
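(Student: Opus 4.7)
The plan is to reduce the theorem to identifying the joint probability density of the eigenvalues of $A:=[U_{i,j}]_{1\le i,j\le n}$ as an orthogonal polynomial ensemble on $\mathbb{D}^n$ with weight $\mu^{(m,\delta)}$; once that is done, the determinantal structure and the kernel $K_n^{(m,\delta)}$ follow by the standard biorthogonal/Gram--Schmidt argument for such ensembles.

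The key computation is a Hua--Pickrell analogue of the classical Zyczkowski--Sommers density for $A$. Partition $U=\begin{pmatrix}A&B\\C&D\end{pmatrix}$ and introduce
$$\Theta := D + C(I-A)^{-1}B,$$
the value at $z=1$ of the characteristic function $D+C(zI-A)^{-1}B$ of $U$. A direct computation using the four block-unitarity identities for $U$ shows $\Theta\in\mathrm{U}(m)$ whenever $I-A$ is invertible, and the Schur complement identity gives the clean factorization
$$\det(I-U)=\det(I-A)\,\det(I-\Theta).$$
The crucial structural observation is that, under the Haar measure on $\mathrm{U}(n+m)$, the push-forward through $U\mapsto(A,\Theta)$ makes $A$ and $\Theta$ \emph{independent}, with $\Theta$ Haar-distributed on $\mathrm{U}(m)$ and $A$ having the Zyczkowski--Sommers density $\propto\det(I-A^*A)^{m-n}$ on $\{\|A\|<1\}$. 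The independence and Haar law of $\Theta$ both come from the bi-invariance of Haar on $\mathrm{U}(n+m)$: the translations $U\mapsto\mathrm{diag}(I_n,W)\cdot U$ and $U\mapsto U\cdot\mathrm{diag}(I_n,W)$ fix $A$ while acting on $\Theta$ by $\Theta\mapsto W\Theta$ and $\Theta\mapsto\Theta W$; bi-invariance then forces the conditional law $\mathcal{L}(\Theta\mid A)$ to coincide with Haar on $\mathrm{U}(m)$ for every $A$, whence the joint law factorizes.

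Once independence is in place the Hua--Pickrell density splits cleanly, since $|\det(I-U)^\delta|^2=|\det(I-A)^\delta|^2\cdot|\det(I-\Theta)^\delta|^2$: $A$ and $\Theta$ remain independent under $\mu_{\mathrm{U}(n+m)}^{(\delta)}$, and the marginal of $A$ becomes
$$p(A)\propto |\det(I-A)^\delta|^2\,\det(I-A^*A)^{m-n}\,dA$$
on $\{\|A\|<1\}$. To pass from this matrix density to the joint eigenvalue density, apply the Schur triangularization $A=V(T+N)V^*$ with $T=\mathrm{diag}(\lambda_1,\ldots,\lambda_n)$ and $N$ strictly upper triangular; the Jacobian reads $dA=\mathrm{const}\cdot|\Delta(\lambda)|^2\,dT\,dN\,d[V]$. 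The factor $|\det(I-A)^\delta|^2=\prod_i|(1-\lambda_i)^\delta|^2$ depends only on $T$ and pulls out of the $V$- and $N$-integrals; the $V$-integration contributes a constant, and the integral over $N$ of $\det\bigl(I-(T+N)^*(T+N)\bigr)^{m-n}$ is precisely the (nontrivial but classical) computation of Zyczkowski--Sommers that yields $\prod_i(1-|\lambda_i|^2)^{m-1}$. Collecting pieces gives
$$p(\lambda_1,\ldots,\lambda_n)\;\propto\;|\Delta(\lambda)|^2\prod_{i=1}^n|(1-\lambda_i)^\delta|^2\,(1-|\lambda_i|^2)^{m-1}$$
on $\mathbb{D}^n$, which is manifestly the orthogonal polynomial ensemble attached to $\mu^{(m,\delta)}$; its correlation kernel is the reproducing kernel $\sum_{k=0}^{n-1}P_k^{(m,\delta)}(z)\overline{P_k^{(m,\delta)}(w)}$ of $\mathrm{span}\{1,z,\ldots,z^{n-1}\}\subset L^2(\mathbb{D},\mu^{(m,\delta)})$, as asserted.

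The step I expect to be the main obstacle is the independence/Haar statement for $(A,\Theta)$: it requires identifying the action of block-diagonal translations on $\Theta$ transparently, disposing of the measure-zero exceptional set where $I-A$ fails to be invertible, and checking once and for all that $\Theta(z)$ is unitary on the unit circle whenever $U$ is unitary. Everything else---the Schur complement identity, the Zyczkowski--Sommers $N$-integration, and the passage from an orthogonal polynomial ensemble to its determinantal point process---is by now standard and amounts to bookkeeping.
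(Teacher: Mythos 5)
Your reduction strategy (factor $\det(I-U)=\det(I-A)\det(I-\Theta)$ with $\Theta=D+C(I-A)^{-1}B$ unitary, prove $A\perp\Theta$ with $\Theta$ Haar via bi-invariance, reweight, then pass to eigenvalues) is genuinely different from the paper, which instead builds an explicit decomposition of the Haar and Hua--Pickrell measures through the Schur decomposition of the $n\times n$ corner, spherical coordinates for the rows of the lower-left block and a polar decomposition of the modified lower-right block (Lemmas~\ref{lemma about Haar measure}--\ref{lemma about Hua-Pickrell measure}), and then integrates out the residual $\mathrm{U}(m)$ variable $W$ using left-invariance of Haar on $\mathrm{U}(m)$. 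Your independence argument for $(A,\Theta)$ is correct and is an elegant substitute for the paper's Jacobian bookkeeping: $\mathrm{diag}(I_n,W)U$ and $U\,\mathrm{diag}(I_n,W)$ fix $A$ and send $\Theta\mapsto W\Theta,\ \Theta W$, and randomizing $W$ over Haar does give $(A,\Theta)\sim \mathcal{L}(A)\otimes\mu_{\mathrm{U}(m)}$; likewise the splitting $|\det(I-U)^{\delta}|^2=|\det(I-A)^{\delta}|^2\,|\det(I-\Theta)^{\delta}|^2$ matches the paper's \eqref{W}.

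However, there is a genuine gap in the step where you invoke the Zyczkowski--Sommers \emph{matrix} density $p(A)\propto\det(I-A^*A)^{m-n}$ and then redo the Schur-triangularization/$N$-integration. That density exists only when $m\geq n$. For $m<n$, unitarity gives $I_n-A^*A=C^*C$ with $C$ of size $m\times n$, so $I-A^*A$ has rank at most $m<n$: the truncation $A$ then has at least $n-m$ unit singular values, its law is singular with respect to Lebesgue measure on $\mathbb{C}^{n\times n}$, and the proposed density (and hence the integral over the strictly upper-triangular part $N$) does not exist. This is not a peripheral case: the theorem allows all $n,m\in\mathbb{N}_+$, and the whole point of Theorem~\ref{thm-Limiting point process} is $m$ fixed and $n\to\infty$, i.e.\ precisely $m<n$. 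The gap is repairable within your framework: by your independence statement, under $\mu^{(\delta)}_{\mathrm{U}(n+m)}$ the law of $A$ is the Haar-truncation law of $A$ reweighted by $|\det(I-A)^{\delta}|^2=\prod_i|(1-z_i)^{\delta}|^2$ (the $\Theta$-factor integrates to a constant independent of $A$), and since this weight depends on $A$ only through its eigenvalues, you may simply multiply the known Zyczkowski--Sommers \emph{eigenvalue} density $\prod_i(1-|z_i|^2)^{m-1}|\Delta(z)|^2$ (valid for every $m\geq1$) by it --- no matrix density is needed. As written, though, the argument proves the theorem only for $m\geq n$, whereas the paper's measure-decomposition proof covers all $m\geq 1$ directly.
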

  Clearly, the determinantal point process $\mathscr{X}_n^{(m,\delta)}$  is equal to the determinantal point process $\mathscr{X}_n^{[m]}$ only if $\delta=0$.
  However, we next show that the limiting point process $\mathscr{X}^{(m,\delta)}$ is always $\mathscr{X}^{[m]}$, independent of the parameter $\delta$.
\begin{theorem}\label{thm-Limiting point process}
	For any $\delta\in\mathbb{C}$ satisfying $\mathrm{Re}\,\delta>-1/2$, the limiting point process taken by $n\to\infty$ of the determinantal poin process $\mathscr{X}_n^{(m,\delta)}$ is always $\mathscr{X}^{[m]}$, independent of $\delta$. Here $\mathscr{X}^{[m]}$ is the determinantal point process on the unit disc $\mathbb{D}$ with weighted Bergman kernel
	\begin{equation*}
		\begin{split}
			K^{[m]}(z,w)=\frac{1}{(1-z\overline w)^{m+1}}
		\end{split}
	\end{equation*}
	with respect to the reference measure $d\mu^{[m]}(z)=\frac{m}{\pi}(1-|z|)^{m-1}d\sigma(z)$.
\end{theorem}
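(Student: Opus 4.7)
The plan is to identify the pointwise/locally uniform limit of the kernels $K_n^{(m,\delta)}$ and then convert kernel convergence into weak convergence of the determinantal point processes. Observe first that $K_n^{(m,\delta)}(\cdot,w)$ is the reproducing kernel of the finite-dimensional subspace $\mathcal{P}_n \subset A^2(\mathbb{D},\mu^{(m,\delta)})$ spanned by $\{1,z,\dots,z^{n-1}\}$, where $A^2(\mathbb{D},\mu^{(m,\delta)})$ denotes the weighted Bergman space of holomorphic functions in $L^2(\mathbb{D},\mu^{(m,\delta)})$. Since $\mathcal{P}_n$ is increasing and exhausts the closure of polynomials, the task reduces to (i) verifying that polynomials are dense in $A^2(\mathbb{D},\mu^{(m,\delta)})$ and (ii) computing the reproducing kernel of that ambient space explicitly.

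For (ii), the key observation is that multiplication by the holomorphic branch of $(1-z)^{-\delta}$ on $\mathbb{D}$ sets up an isometric isomorphism
\begin{equation*}
U:A^2(\mathbb{D},\mu^{[m]})\longrightarrow A^2(\mathbb{D},\mu^{(m,\delta)}),\qquad (Uf)(z)=\sqrt{\tfrac{m}{\pi}}(1-z)^{-\delta}f(z).
\end{equation*}
Indeed, $|(1-z)^{-\delta}|^2\cdot|(1-z)^{\delta}|^2=1$, so a direct change of variables shows $\|Uf\|_{\mu^{(m,\delta)}}=\|f\|_{\mu^{[m]}}$, and the obvious inverse shows $U$ is onto. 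Because reproducing kernels transform under multiplicative unitaries by $K_2(z,w)=\phi(z)\overline{\phi(w)}K_1(z,w)$ with $\phi(z)=\sqrt{m/\pi}(1-z)^{-\delta}$, the reproducing kernel of $A^2(\mathbb{D},\mu^{(m,\delta)})$ equals
\begin{equation*}
K^{(m,\delta)}(z,w)=\frac{m/\pi}{(1-z)^{\delta}\overline{(1-w)^{\delta}}(1-z\bar w)^{m+1}}.
\end{equation*}
For (i), I would approximate $f\in A^2(\mathbb{D},\mu^{(m,\delta)})$ by dilations $f_r(z):=f(rz)$, which extend holomorphically to a neighborhood of $\overline{\mathbb{D}}$ and are therefore uniform limits of their Taylor polynomials; the $L^2(\mu^{(m,\delta)})$ convergence $f_r\to f$ as $r\uparrow 1$ then follows from a dominated convergence argument, using that for $\mathrm{Re}\,\delta>-1/2$ the factor $|(1-z)^{\delta}|^2$ is locally integrable at $z=1$.

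Once density holds, a standard reproducing-kernel argument (monotone convergence of orthogonal projections in a Hilbert space of holomorphic functions) yields locally uniform convergence on $\mathbb{D}\times\mathbb{D}$:
\begin{equation*}
K_n^{(m,\delta)}(z,w)\longrightarrow K^{(m,\delta)}(z,w),\qquad n\to\infty.
\end{equation*}
To translate this into weak convergence of point processes, I would work with correlation densities with respect to the common reference $d\sigma$. The $k$-point correlation of $\mathscr{X}_n^{(m,\delta)}$ equals
\begin{equation*}
\rho_{n,k}(z_1,\dots,z_k)=\det\!\big[K_n^{(m,\delta)}(z_i,z_j)(1-z_i)^{\delta}\overline{(1-z_j)^{\delta}}\big]_{i,j=1}^{k}\prod_{i=1}^{k}(1-|z_i|^2)^{m-1},
\end{equation*}
after absorbing the non-radial factors $|(1-z_i)^{\delta}|^2$ into the determinant. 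By the kernel convergence above, the bracketed entries tend locally uniformly to $\tfrac{m}{\pi}(1-z_i\bar z_j)^{-(m+1)}=\tfrac{m}{\pi}K^{[m]}(z_i,z_j)$, so $\rho_{n,k}$ converges locally uniformly on $\mathbb{D}^k$ to the $k$-point correlation of $\mathscr{X}^{[m]}$ with respect to $d\sigma^{\otimes k}$. Locally uniform convergence of all correlation functions on the open disc implies convergence in distribution of the determinantal point processes, completing the proof.

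The main obstacle I anticipate is verifying step (i), the density of polynomials in $A^2(\mathbb{D},\mu^{(m,\delta)})$, because the weight is not radial; the dilation argument sketched above works but requires a careful domination estimate near $z=1$, where the factor $|(1-z)^{\delta}|^2$ is mildly singular. Every other step is either an explicit Hilbert-space calculation or a standard reproducing-kernel/DPP convergence fact, and the gauge transformation $(1-z)^{\delta}\overline{(1-w)^{\delta}}$ absorbs the $\delta$-dependence cleanly, explaining the independence of the limit on $\delta$.
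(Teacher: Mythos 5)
Your overall route coincides with the paper's: identify $K_n^{(m,\delta)}$ as the reproducing kernel of the span of $\{1,\dots,z^{n-1}\}$, show that polynomials are dense in $L_a^2(\mathbb{D},\mu^{(m,\delta)})$ so that the limiting kernel is the reproducing kernel of that space, compute this kernel via the multiplication isometry $f\mapsto\sqrt{m/\pi}\,(1-z)^{-\delta}f$ from $L_a^2(\mathbb{D},\mu^{[m]})$ (the paper gets the same formula, Lemma~\ref{lemma-5}, by citing the transformation rule for reproducing kernels under multipliers), and conclude from the gauge identity $\det\big[K^{(m,\delta)}(z_i,z_j)\big]\prod_k d\mu^{(m,\delta)}(z_k)=\det\big[K^{[m]}(z_i,z_j)\big]\prod_k d\mu^{[m]}(z_k)$ that the limit is $\mathscr{X}^{[m]}$ independently of $\delta$. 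Those parts are correct and essentially identical to the paper.

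The genuine gap is your step (i), the density of polynomials, which is exactly the paper's Lemma~\ref{lemma-4} but is proved there by a different and complete argument. Your dilation scheme is not justified as sketched: ``dominated convergence, using that $|(1-z)^{\delta}|^2$ is locally integrable at $z=1$'' does not supply a dominating function, because the difficulty is not the weight near $z=1$ but the size of $|f(rz)|^2$ near the whole boundary for a general $f$ in the space. Since the weight is non-radial, the standard monotonicity $\|f_r\|\le\|f\|$ is unavailable, and the obvious radial majorant $\sup_{|u|\le |z|}|f(u)|^2$, combined with the pointwise bound $|f(u)|\le\|f\|\,K^{(m,\delta)}(u,u)^{1/2}\asymp\|f\|\,(1-|u|)^{-(m+1)/2}|1-u|^{-\mathrm{Re}\,\delta}$, produces an integrand growing at least like $(1-|z|)^{-2}$ on most of the boundary, hence non-integrable; so the dominated-convergence step needs genuinely new input (for instance a uniform bound $\sup_{r<1}\|f_r\|_{\mu^{(m,\delta)}}\lesssim\|f\|_{\mu^{(m,\delta)}}$ for this Bekoll\'e--Bonami-type weight). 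The paper sidesteps dilations entirely: after the unitary, density reduces to $1\in\overline{\mathrm{span}}\{(1-z)^{\delta}z^k\}$ in the radial space $L_a^2(\mathbb{D},\mu^{(m)})$; for $\mathrm{Re}\,\delta\ge 0$ one approximates $(1-rz)^{-\delta}$ uniformly on $\overline{\mathbb{D}}$ by polynomials and lets $r\to 1^-$ using the uniform bound $\big|\tfrac{1-z}{1-rz}\big|\le 2$, and for $-1/2<\mathrm{Re}\,\delta<0$ one applies Mergelyan's theorem to $(1-z)^{-\delta}$, which is continuous on $\overline{\mathbb{D}}$. Replacing your dilation step by such an argument (or proving the missing uniform dilation estimate) closes the gap; the rest of your proof then goes through.
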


\section{Preliminaries}\label{Preliminaries}
Let us recall some notations and definitions of determinantal point processes and reproducing kernels.
\subsection{Determinantal point process}
Let $E$ be a locally compact Polish space, $\mathcal{B}_0(E)$ the collection of all pre-compact Borel subsets of $E$. Denote $\Conf(E)$ the space of all locally finite configurations over $E$, that is,
\begin{equation*}
	\begin{split}
		\Conf(E)=\big\{\xi=\textstyle\sum_{i}\delta_{x_i}\,\big|\,\,\forall i,\,x_i\in E \, \text{and $\xi(\Delta)<\infty$ for all $\Delta\in\mathcal{B}_0(E)$}\big\}.
	\end{split}
\end{equation*}
Consider the vague topology on $\Conf(E)$, the weakest topology on $\Conf(E)$ such that for any $f\in C_c(E)$, the map $\Conf(E)\ni\xi\mapsto\int_{E}fd\xi$ is continuous. Here $C_c(E)$ is the space of all continuous functions on $E$ with compact support. The configuration space $\Conf(E)$ equipped with the vague topology is also a Polish space. The Borel $\sigma$-algebra $\mathcal{F}$ on $\Conf(E)$ is generated by the cylinder sets $C_n^\Delta=\big\{\xi\in\Conf(E)\,|\,\,\xi(\Delta)=n\big\}$, where $n\in\mathbb{N}=\{0,1,2,\cdots\}$ and $\Delta\in\mathcal{B}_0(E)$. Let $(\Omega,\mathcal{F}(\Omega),\mathbb{P})$ be a probability space. By definition, a random variable
\begin{equation*}
	\begin{split}
		\mathscr{X}:(\Omega,\mathcal{F}(\Omega),\mathbb{P})\to(\Conf(E),\mathcal{F})
	\end{split}
\end{equation*}
taken integer-valued non-negative Radon measure on $E$ is called a point process on $E$. For more details,  we refer the reader to   \cite{DV,Fo, FK, HKPV, KK, Le}.

A point process $\mathscr{X}$ is called simple if it almost surely assigns at most measure one to singletons. In the simple case, $\mathscr{X}$ can be identified with a random discrete subset of $E$. And for any $\Delta\in\mathcal{B}_0(E)$, $\mathscr{X}(\Delta)$ represents the number of points that fall in $\Delta$.

Let $\mu$ be a reference Radon measure on $E$ and $K:E\times E\to\mathbb{C}$ be a measurable function. A simple point process $\mathscr{X}$ is called determinantal on $E$ associated to the kernel $K$ with respect to the reference measure $\mu$, if for every $n\in\mathbb{N}_+$ and any family of mutually disjoint subsets $\Delta_1,\Delta_2,\cdots,\Delta_m\in\mathcal{B}_0(E)$, $m\geq 1$, $n_k\geq 1$, $1\leq k\leq m$, $n_1+n_2+\cdots+n_m=n$,
\begin{equation}\label{DPP-definition}
	\begin{split}
		\mathbb{E}\Big[\prod_{k=1}^{m}\frac{\mathscr{X}(\Delta_k)\,!}{\big(\mathscr{X}(\Delta_k)-n_k\big)\,!}\Big]=\int_{\Delta_1^{n_1}\times\cdots\times\Delta_m^{n_m}} \det\big[K(x_i,x_j)\big]_{1\leq i,j\leq n}d\mu(x_1)\cdots d\mu(x_n).
	\end{split}
\end{equation}

\subsection{Orthonormal polynomials}
From now on, we always assume that $E\subset \C$ is a domain and $\mu$ be the reference Radon measure on $E$. Suppose that $\{\varphi_k\}_{k=0}^{n-1}$ is a finite orthonormal set in $L^2(E,\mu)$. Denote
\begin{equation*}
	\begin{split}
		K(x,y)=\sum_{k=0}^{n-1}\varphi_k(x)\overline{\varphi_k(y)}.
	\end{split}
\end{equation*}
Then there exists a unique determinantal point process on $E$ with kernel $K$
with respect to the reference measure $\mu$. And the number of points in this determinantal point process is equal to $n$, almost surely.

Consider a random vector in $E^n \subset \C^n$ with density
\begin{equation*}
	\begin{split}
		\frac{1}{n!}\det\big[K(x_i,x_j)\big]_{1\leq i,j\leq n}\prod_{k=1}^{n}d\mu(x_k).
	\end{split}
\end{equation*}
Erase the labels and regard it as a point process on $E$ with $n$ points, then it implies a determinantal point process on $E$ with kernel $K$ with respect to the reference measure $\mu$. The proof of these results can be found in \cite{HKPV1, HKPV}.

Let $\{P_k\}_{k=0}^{\infty}$ be a family of orthonormal polynomials obtained by applying Gram-Schmidt orthogonalization procedure to $\{x^k\}_{k=0}^{\infty}$ in $L^2(E,\mu)$. For any $n\in\mathbb{N_+}$, set
\begin{equation}\label{finite polynomials kernel}
	\begin{split}
		K_n(x,y)=\sum_{k=0}^{n-1}P_k(x)\overline{P_k(y)}.
	\end{split}
\end{equation}
It follows from the Gram-Schmidt orthogonalization procedure and the definition of Vandermonde determinant that
\begin{equation*}
	\begin{split}
		\det\big[K_n(x_i,x_j)\big]_{1\leq i,j\leq n}=\prod_{k=0}^{n-1}\mathrm{lc}^2(P_k)\prod_{1\leq i<j\leq n}|x_i-x_j|^2,
	\end{split}
\end{equation*}
where $\mathrm{lc}(P)$ denotes the leading coefficient of polynomial $P$.

Now consider a random vector in $E^n\subset \C^n$ whose density is proportional to
\begin{equation}\label{OPE}
	\begin{split}
		\prod_{1\leq i<j\leq n}|x_i-x_j|^2\prod_{k=1}^{n}d\mu(x_k).
	\end{split}
\end{equation}
It follows that the point process on $E\subset \C$ induced by erasing the labels is determinantal with kernel $K_n$ with respect to the reference measure $\mu$. The ensemble \eqref{OPE}, called the  orthogonal polynomial ensemble, has been extensively studied in random matrix models; see, e.g. \cite{BFFMPW, Dy, GMW, HKPV, Me, NW} and references therein.

\subsection{Reproducing kernel} When the kernel function $K $ appeared in the formula
(\ref{DPP-definition}) is hermitian,   it is a reproducing kernel for some Hilbert space.

Recall that a reproducing kernel Hilbert space $\mathcal{H}$ on $E$ is a Hilbert space consisted by some measurable functions satisfied that for every $x\in E$, the linear evaluation functional $ev_x:\mathcal{H}\to\mathbb{C}$ defined by $ev_x(f)=f(x)$ is bounded. Since every bounded linear functional is given by the inner product with a unique vector in $\mathcal{H}$, we know that for every $x\in E$, there is a unique vector $k_x\in\mathcal{H}$ such that $f(x)=\left\langle f,k_x\right\rangle_\mathcal{H}$ for every $f\in\mathcal{H}$. The function $k_x$ is called the reproducing kernel of the point $x$. The two-variable function $K_{\mathcal{H}}:E \times E \to\mathbb{C}$ defined by
\begin{equation*}
	\begin{split}
		K_{\mathcal{H}}(x,y)=k_y(x)
	\end{split}
\end{equation*}
is called the reproducing kernel of $\mathcal{H}$. We refer the reader to \cite{Ar, PR} for more properties and details of reproducing kernels.

It is well known that the weighted Bergman space
\begin{equation*}
	\begin{split}
		L_a^2(\mathbb{D},\mu^{[m]})=\Big\{f:\mathbb{D}\to\mathbb{C}\,\Big|\,f\,\text{is holomorphic and}\,\int_{\mathbb{D}}|f(z)|^2d\mu^{[m]}(z)<\infty\Big\}
	\end{split}
\end{equation*}
is a reproducing kernel Hilbert space with reproducing kernel
\begin{equation*}
	\begin{split}
		K^{[m]}(z,w)=\frac{1}{(1-z\overline w)^{m+1}},
	\end{split}
\end{equation*}
which is called the weighted Bergman kernel. The linear subspace  $\mathrm{span}\{z^k\}_{k=0}^{\infty}$ is dense in $L_a^2(\mathbb{D},\mu^{[m]})$, and the orthonormal polynomials have the following form:
\begin{equation*}
	\begin{split}
		P_k^{[m]}(z)=\sqrt{\frac{(m+1)(m+2)\cdots(m+k)}{k\,!}}z^k\,,\quad k\geq0.
	\end{split}
\end{equation*}
 We refer the reader to  \cite{HKZ, Zh} for a comprehensive overview.

Suppose that the reproducing kernel Hilbert space $\mathcal{H}$ is a subspace of $L^2(E,\mu)$ with reproducing kernel $K_{\mathcal{H}}(\cdot,\cdot)$. If we assume moreover that the reproducing kernel of $\mathcal{H}$ is locally of trace class, then   by a theorem obtained by Macchi \cite{Ma} and Soshnikov \cite{So}, as well as Shirai and Takahashi \cite{ST}, there exists a unique determinantal point process $\mathscr{X}_{\mathcal{H}}$ on $E$ with kernel $K_{\mathcal{H}}(\cdot,\cdot)$ with respect to the reference measure $\mu$.

In fact, the kernel given in \eqref{finite polynomials kernel} is the reproducing kernel of the finite dimensional reproducing kernel Hilbert space
\begin{equation*}
	\begin{split}
		\mathcal{H}_n=\mathrm{span}\{x^k\}_{k=0}^{n-1}\subset L^2(E,\mu).
	\end{split}
\end{equation*}
And the associated determinantal point process $\mathscr{X}_{\mathcal{H}_n}$ is exactly determined by the orthogonal polynomial ensemble~\eqref{OPE}.

Next we consider an infinite dimensional reproducing kernel Hilbert space $\mathcal{H}\subset L^2(E,\mu)$, and suppose that $\mathrm{span}\{x^k\}_{k=0}^{\infty}$ is dense in $\mathcal{H}$. Then the reproducing kernel of $\mathcal{H}$ has the following form:
\begin{equation}\label{reproducing kernel}
	\begin{split}
		K_{\mathcal{H}}(x,y)=\sum_{k=0}^{\infty}P_k(x)\overline{P_k(y)},
	\end{split}
\end{equation}
where $\{P_k\}_{k=0}^{\infty}$ is the family of orthonormal polynomials obtained by applying Gram-Schmidt orthogonalization procedure to $\{x^k\}_{k=0}^{\infty}$ in $L^2(E,\mu)$.

Hence the determinantal point process $\mathscr{X}_{\mathcal{H}}$ with kernel~\eqref{reproducing kernel} can be seen as the limiting point process taken by $n\to\infty$ of the determinantal point process $\mathscr{X}_{\mathcal{H}_n}$ with kernel~\eqref{finite polynomials kernel}. For more concrete models, more connections between determinantal point processes and reproducing kernels; see, e.g. \cite{BQ, HKPV, Kr, Kri, OS}.
\section{Proof of Theorem~\ref{thm-Eigenvalues of truncations}}
In this section, we are going to establish the point process of eigenvalues of the truncated unitary matrix drawn from the Hua-Pickrell distribution $\mu_{\mathrm{U}(n+m)}^{(\delta)}$.

 Let $\mathscr{T}(n,m)$ be
  the product measurable space
\begin{equation*}
	\begin{split}
		\mathscr{T}(n,m)=\mathrm{U}(m)\times\mathscr{V}(n)\times(S^{2m-1})^n\times\mathbb{C}^n,
	\end{split}
\end{equation*}
where $\mathscr{V}(n)$ is the submanifold of $n\times n$ unitary matrices with non-negative diagonal elements and $S^{2m-1}$ is the $(2m-1)$-dimensional unit sphere. There exists  a   product probability measure $\mathcal{T}_{(n,m)}$ on $\mathscr{T}(n,m)$  defined by
\begin{equation*}
	\begin{split}
		&\quad\,\,d\mathcal{T}_{(n,m)}(W,V,\omega_1,\cdots,\omega_n,z_1,\cdots,z_n)\\
		&=C_{(n,m)}\prod_{k=1}^{n}(1-|z_k|^2)^{m-1}\prod_{1\leq i<j\leq n}|z_i-z_j|^2\,d\mu_{\mathrm{U}(m)}(W)\,d\nu_n(V)\prod_{k=1}^{n}d\Theta(\omega_k)\prod_{k=1}^{n}d\sigma(z_k),
	\end{split}
\end{equation*}
where $C_{(n,m)}$ is the normalization constant, $d\nu_n$ is the restriction of Haar measure $d\mu_{\mathrm{U}(n)}$ on the submanifold $\mathscr{V}(n)$ and $d\Theta$ is the usual Lebesgue measure on $S^{2m-1}$.

Let  $\mathcal{P}:\mathscr{T}(n,m)\to\mathbb{C}^n$ be the projection map.
The following decomposition for the Haar measure  $\mu_{\mathrm{U}(n+m)}$ on unitary group $\mathrm{U}(n+m)$ may be well-known. Since the lack of references,  we present a full proof for completeness.

\begin{lemma}\label{lemma about Haar measure}
	There exists a measurable transformation
	\begin{equation*}
		\begin{split}
			T:\big(\mathrm{U}(n+m),\mu_{\mathrm{U}(n+m)}\big)\to\big(\mathscr{T}(n,m),\mathcal{T}_{(n,m)}\big),
		\end{split}
	\end{equation*}
	which possesses the following two properties $:$
	
	$(\mathrm{i}).$ for $\mu_{\mathrm{U}(n+m)}$-a.e. $U\in\mathrm{U}(n+m)$, the set $\{(\mathcal{P}T(U))_i:1\leqslant i\leqslant n\} $ is equal to the set $\lambda\big([U_{i,j}]_{1\leq i,j\leq n}\big)$ of  eigenvalues of the matrix $[U_{i,j}]_{1\leq i,j\leq n};$
	
	$(\mathrm{ii}).$ for any integrable function $\varphi\in L^1\big(\mathscr{T}(n,m),\mathcal{T}_{(n,m)}\big)$,
	\begin{equation*}
		\begin{split}
			\int_{\mathrm{U}(n+m)}\varphi\circ T\,d\mu_{\mathrm{U}(n+m)}=\int_{\mathscr{T}(n,m)}\varphi\,d\mathcal{T}_{(n,m)}.
		\end{split}
	\end{equation*}
\end{lemma}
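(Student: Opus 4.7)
The plan is to construct $T$ explicitly via a two-stage procedure and then to verify the measure identity via a Jacobian computation. The first stage is a Schur decomposition of the upper-left $n\times n$ block; the second is an iterative extraction of unit vectors from the off-diagonal rows that produces the $n$ sphere coordinates and the residual $m\times m$ unitary.

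For $\mu_{\mathrm{U}(n+m)}$-a.e.\ $U=\bigl(\begin{smallmatrix}A&B\\ C&D\end{smallmatrix}\bigr)\in\mathrm{U}(n+m)$, the block $A$ is a strict contraction with $n$ distinct eigenvalues $z_1,\ldots,z_n\in\mathbb{D}$, and so admits a unique Schur decomposition $A=VRV^{*}$ with $V\in\mathscr{V}(n)$ and $R$ upper triangular having diagonal $(z_1,\ldots,z_n)$. This produces the $V$- and $(z_k)$-coordinates of $T(U)$. Conjugating by $V\oplus I_m$, one obtains a unitary $\widetilde U=\bigl(\begin{smallmatrix}R&\widetilde B\\ \widetilde C&D\end{smallmatrix}\bigr)$ whose top-left block is upper triangular. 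For each $k=1,\ldots,n$, the $k$-th column of $\widetilde U$ has the form $(R_{1,k},\ldots,R_{k-1,k},z_k,0,\ldots,0,\widetilde C_{\cdot,k})$; the orthogonality of this column against the first $k-1$ columns of $\widetilde U$ uniquely determines the strict upper entries $R_{1,k},\ldots,R_{k-1,k}$ in terms of the previously extracted data, and the unit direction of $\widetilde C_{\cdot,k}$ within a canonical orthonormal basis defines $\omega_k\in S^{2m-1}$. After all $n$ sphere vectors have been extracted, the residual $m\times m$ information is assembled via a polar-type normalization of $D$ into an element $W\in\mathrm{U}(m)$. Property (i) of the lemma is then evident, since the diagonal of $R$ coincides with the eigenvalues of $A$.

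For property (ii), the Jacobian of $T$ factors stage by stage. The Schur decomposition is the classical complex Ginibre-type triangularization: it contributes the Vandermonde factor $\prod_{1\le i<j\le n}|z_i-z_j|^{2}$ together with the fiber measure $d\nu_n(V)$ on $\mathscr{V}(n)$. The iterated sphere extraction converts Lebesgue measure on each $\mathbb{C}^m$-column into $r_k^{2m-1}\,dr_k\,d\Theta(\omega_k)$; combining the radial constraint dictated by the unit length of the $k$-th column of $\widetilde U$ with the differentials of the $R_{i,k}$-variables and with $d\sigma(z_k)$ yields the weight $(1-|z_k|^{2})^{m-1}$ per $k$. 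Finally, the residual $m\times m$ block inherits the Haar measure $d\mu_{\mathrm{U}(m)}(W)$ by the biunitary invariance of the construction, and all proportionality constants are absorbed into $C_{(n,m)}$.

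The main technical obstacle is the bookkeeping in the iterative step: the strict upper entries $R_{i,k}$ are not target coordinates, so one must verify that they are rigidly determined by the remaining data through the unitarity constraints of $\widetilde U$, and that their Jacobian contribution collapses to a constant (absorbed into $C_{(n,m)}$) at each step. Organizing the orthogonality relations column by column should give a block-triangular differential for $T$, which is what makes the weight $(1-|z_k|^{2})^{m-1}$ separate cleanly from the remaining variables and produces the advertised product structure of $d\mathcal{T}_{(n,m)}$.
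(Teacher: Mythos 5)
Your parametrization of $T$ (Schur decomposition of the top-left block, iterative extraction of sphere directions from the lower-left block, a polar-type normalization of the lower-right block) is essentially the same coordinate system the paper uses, and property (i) is indeed immediate from it. The gap is in property (ii), which is the real content of the lemma. Your Jacobian argument treats the Haar measure as though it were a product of Lebesgue measures on the blocks: the Ginibre-type triangularization you invoke computes the Jacobian of $X\mapsto (V,Z,T)$ with respect to Lebesgue measure on $\mathbb{C}^{n\times n}$, and ``Lebesgue measure on each $\mathbb{C}^m$-column'' does not exist here, since $\mu_{\mathrm{U}(n+m)}$ lives on a submanifold of real codimension $(n+m)^2$ in $\mathbb{C}^{(n+m)\times(n+m)}$. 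The paper copes with this by lifting to $\mathrm{GL}(n+m,\mathbb{C})$, decomposing genuine Lebesgue measure there, trading the dependent variables (the strictly upper entries, the block $S$, the radii, the positive part of the corner block) for the entries of the polar factor $P$ of $M=UP^{1/2}$, and only then restricting to the fiber $P=I_{n+m}$ via Fact~6.6.3 of \cite{HKPV}. Your proposal contains no mechanism of this kind, so the claimed stage-by-stage factorization of the Jacobian is unsupported precisely where support is needed.

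Moreover, the weights you assert do not come out of the naive coordinates. With the raw columns, the unit-length constraint gives $r_k^2=1-|z_k|^2-\sum_{i<k}|T_{i,k}|^2$, not $1-|z_k|^2$, and the strictly upper entries (your $R_{i,k}$, the paper's $T_{i,k}$) are generically nonzero functions of the remaining data, cf.\ \eqref{J}; eliminating them and the radii against the unitarity constraints produces correction factors such as $\prod_k\det(\mathscr{A}_k^*\mathscr{A}_k+R_kR_k^*)$ and $\prod_k(1-\alpha_k r_{k+1}^{-1})$, which are nonconstant on $\mathrm{GL}(n+m,\mathbb{C})$ and only become trivial after restriction to the unitary group. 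It is exactly to make the density separate that the paper replaces the raw rows by the modified rows of \eqref{change by positive definite square root} (note that multiplying a row by a nonscalar positive matrix changes its direction, so your $\omega_k$ are not even the paper's coordinates), for which $r_k^2=1-|z_k|^2$ holds on the unitary group and the determinant corrections equal $1$ there. You correctly flag this as ``the main technical obstacle'', but asserting that the contributions ``collapse to a constant'' is the statement that needs proof, so as it stands you have established (i) but not (ii). The one place where a soft argument does suffice is the $W$-coordinate: right invariance of Haar measure under $U\mapsto U(I_n\oplus W')$ fixes $V$, the $z_k$, the $\omega_k$ and the radii while translating $W$, so the conditional law of $W$ is Haar on $\mathrm{U}(m)$; no analogous invariance produces the Vandermonde, the weight $(1-|z_k|^2)^{m-1}$, or the independence and uniformity of the $\omega_k$.
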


\begin{proof}[\bf Proof of Lemma~\ref{lemma about Haar measure}]
We will prove Lemma~\ref{lemma about Haar measure} by the following five steps.

\textbf{Step I.}
For a matrix $M\in\mathrm{GL}(n+m,\mathbb{C})$, partition it as
\begin{equation}\label{partitioned matrix}
	\begin{split}
		M=
		\begin{bmatrix}
			X&C\\
			B^*&A
		\end{bmatrix},
	\end{split}
\end{equation}
where $X$ has size $n\times n$. By omitting a lower dimensional submanifold of $\mathrm{GL}(n+m,\mathbb{C})$, whose Lebesgue measure is zero, we may assume that the eigenvalues of $X$ are mutually distinct.
 Then we can put the eigenvalues of $X$ in lexicographical order by $z_1<z_2<\dots<z_n$.

The submatrix $X$ can be uniquely written as Schur decomposition
\begin{equation}\label{Schur decomposition}
	\begin{split}
		X=V(Z+T)V^*,
	\end{split}
\end{equation}
where $V$ is unitary with $V_{i,i}\geq0$, $1\leq i\leq n$, $T$ is strictly upper triangular, $Z$ is the diagonal matrix $\mathrm{diag}(z_1,z_2,\dots,z_n)$; see e.g. \cite{HJ}. Therefore, there is a unique matrix decomposition of $M$ as follow:
\begin{equation}\label{modified partitioned matrix}
	\begin{split}
		M=
		\begin{bmatrix}
			V&0\\
			0&I
		\end{bmatrix}
		\begin{bmatrix}
			Z+T&S\\
			R^*&A
		\end{bmatrix}
		\begin{bmatrix}
			V^*&0\\
			0&I
		\end{bmatrix},
	\end{split}
\end{equation}
where $R=V^*B$ and $S=V^*C$.

Write   $R=\big[\mathcal{R}_1^{\mathrm{T}},\mathcal{R}_2^{\mathrm{T}},\cdots,\mathcal{R}_n^{\mathrm{T}}\big]^{\mathrm{T}}$, where each $\mathcal{R}_k$ is a $m$-tuple row vector. For $1\leq k\leq n$, let $R_k=\big[\mathcal{R}_1^{\mathrm{T}},\mathcal{R}_2^{\mathrm{T}},\cdots,\mathcal{R}_k^{\mathrm{T}}\big]^{\mathrm{T}}$ denote the submatrix consisting of the first $k$ rows of $R$, and $\mathscr{A}_k$ be the submatrix consisting of the first $k$ rows and columus of matrix $Z+T$. In particular, $R_1=\mathcal{R}_1$, $R_n=R$, and $\mathscr{A}_1=z_1$, $\mathscr{A}_n=Z+T$. For $1\leq k\leq n$, since $I+(R_k^*\mathscr{A}_k^{-1})(R_k^*\mathscr{A}_k^{-1})^*$ is a positive definite matrix, the positive definite square root $\big[I+(R_k^*\mathscr{A}_k^{-1})(R_k^*\mathscr{A}_k^{-1})^*\big]^{1/2}$ makes sense. Denote
\begin{equation}\label{change by positive definite square root}
	\begin{split}
		\left\{\begin{array}{l}
			\widetilde{\mathcal{R}}_1=\mathcal{R}_1,\\
			\widetilde{\mathcal{R}}_{k+1}=\mathcal{R}_{k+1}\big[I+(R_k^*\mathscr{A}_k^{-1})(R_k^*\mathscr{A}_k^{-1})^*\big]^{1/2},\quad k=1,2,\cdots,n-1,\\
			\widetilde{A}=\big[I+(R_n^*\mathscr{A}_n^{-1})(R_n^*\mathscr{A}_n^{-1})^*\big]^{1/2}A.
		\end{array}\right.
	\end{split}
\end{equation}

For $k=1,2,\cdots,n$, write $\widetilde{\mathcal{R}}_{k}=r_k\omega_k$ in the spherical coordinate system, where the $m$-tuple row vector $\omega_k$ lies in the $(2m-1)$-dimensional unit sphere $S^{2m-1}$ and $r_k$ is a non-negative number. Let $\widetilde{A}=WD^{1/2}$ be the matrix's polar decomposition of $\widetilde{A}$, i.e., $W$ lies in the $m\times m$  unitary group $\mathrm{U}(m)$ and $D^{1/2}$ is the positive definite square root of a positive definite matrix $D$.

We have    the following measure decomposition for Lebesgue measure on the general linear group $\mathrm{GL}(n+m,\mathbb{C})$.

\begin{fact}\label{measure decomposition for general linear group}
	In the  above notations, the Lebesgue measure on $\mathrm{GL}(n+m,\mathbb{C})$ has the following decomposition:
	\begin{equation*}
		\begin{split}
			\bigwedge_{1\leq i,j\leq n+m}|dM_{i,j}|^2&\,\propto\,\frac{f(D)\prod_{k=1}^{n}r_k^{2m-2}\prod_{1\leq i<j\leq n}|z_i-z_j|^2}{\det[I+(R_n^*\mathscr{A}_n^{-1})(R_n^*\mathscr{A}_n^{-1})^*]^m\prod_{k=1}^{n-1}\det[I+(R_k^*\mathscr{A}_k^{-1})(R_k^*\mathscr{A}_k^{-1})^*]}\\
			&\quad\cdot\bigwedge_{1\leq k\leq n}d\sigma(z_k)\bigwedge_{1\leq k\leq n}d\Theta(\omega_k)\bigwedge d\nu_n(V)\bigwedge d\mu_{\mathrm{U}(m)}(W)\\
			&\quad\cdot\bigwedge_{1\leq i<j\leq n}|dT_{i,j}|^2\bigwedge_{\substack{1\leq i\leq n\\1\leq j\leq m}}|dS_{i,j}|^2\bigwedge_{1\leq i,j\leq m}dD_{i,j}\bigwedge_{1\leq k\leq n}r_kdr_k,
		\end{split}
	\end{equation*}
	where $f$ is a smooth function of $D$.
\end{fact}

Proof of Fact~\ref{measure decomposition for general linear group}:
	It follows from the partitioned matrix form~\eqref{partitioned matrix} that
	\begin{equation*}
		\begin{split}
			\bigwedge_{1\leq i,j\leq n+m}|dM_{i,j}|^2=\bigwedge_{1\leq i,j\leq n}|dX_{i,j}|^2\bigwedge_{\substack{1\leq i\leq n\\1\leq j\leq m}}|dC_{i,j}|^2\bigwedge_{\substack{1\leq i\leq n\\1\leq j\leq m}}|dB_{i,j}|^2\bigwedge_{1\leq i,j\leq m}|dA_{i,j}|^2.
		\end{split}
	\end{equation*}
	Noticing that $R=V^*B$, $S=V^*C$ and $V$ is unitary, we have
	\begin{equation*}
		\begin{split}
			\bigwedge_{\substack{1\leq i\leq n\\1\leq j\leq m}}|dB_{i,j}|^2=\bigwedge_{\substack{1\leq i\leq n\\1\leq j\leq m}}|dR_{i,j}|^2,
		\end{split}
	\end{equation*}
	and
	\begin{equation*}
		\begin{split}
			\bigwedge_{\substack{1\leq i\leq n\\1\leq j\leq m}}|dC_{i,j}|^2=\bigwedge_{\substack{1\leq i\leq n\\1\leq j\leq m}}|dS_{i,j}|^2.
		\end{split}
	\end{equation*}
	Therefore,
	\begin{equation}\label{A}
		\begin{split}
			\bigwedge_{1\leq i,j\leq n+m}|dM_{i,j}|^2=\bigwedge_{1\leq i,j\leq n}|dX_{i,j}|^2\bigwedge_{\substack{1\leq i\leq n\\1\leq j\leq m}}|dS_{i,j}|^2\bigwedge_{\substack{1\leq i\leq n\\1\leq j\leq m}}|dR_{i,j}|^2\bigwedge_{1\leq i,j\leq m}|dA_{i,j}|^2.
		\end{split}
	\end{equation}
	
Applying the Ginibre's measure decomposition in Page 105 in \cite{HKPV} to
	the matrix's Schur decomposition~\eqref{Schur decomposition}, we have
	\begin{equation}\label{B}
		\begin{split}
			\bigwedge_{1\leq i,j\leq n}|dX_{i,j}|^2\,\propto\,\prod_{1\leq i<j\leq n}|z_i-z_j|^2\bigwedge_{1\leq k\leq n}d\sigma(z_k)\bigwedge d\nu_n(V)\bigwedge_{1\leq i<j\leq n}|dT_{i,j}|^2.
		\end{split}
	\end{equation}
	
We next turn to the computation of $\bigwedge\limits_{\substack{1\leq i\leq n\\1\leq j\leq m}}|dR_{i,j}|^2$ and $\bigwedge\limits_{1\leq i,j\leq m}|dA_{i,j}|^2$.
	The formulas~\eqref{change by positive definite square root} implies that
	\begin{equation*}
		\begin{split}
			\bigwedge_{\substack{1\leq i\leq n\\1\leq j\leq m}}|dR_{i,j}|^2=\prod_{k=1}^{n-1}\det[I+(R_k^*\mathscr{A}_k^{-1})(R_k^*\mathscr{A}_k^{-1})^*]^{-1}\bigwedge_{\substack{1\leq i\leq n\\1\leq j\leq m}}|d\widetilde{R}_{i,j}|^2,
		\end{split}
	\end{equation*}
	and
	\begin{equation*}
		\begin{split}
			\bigwedge_{1\leq i,j\leq m}|dA_{i,j}|^2=\det[I+(R_n^*\mathscr{A}_n^{-1})(R_n^*\mathscr{A}_n^{-1})^*]^{-m}\bigwedge_{1\leq i,j\leq m}|d\widetilde{A}_{i,j}|^2.
		\end{split}
	\end{equation*}
	
	Based on the spherical coordinate systems $\widetilde{\mathcal{R}}_{k}=r_k\omega_k$, $k=1,2,\cdots,n$, we have
	\begin{equation*}
		\begin{split}
			\bigwedge_{\substack{1\leq i\leq n\\1\leq j\leq m}}|d\widetilde{R}_{i,j}|^2=\prod_{k=1}^{n}r_k^{2m-2}\bigwedge_{1\leq k\leq n}r_kdr_k\bigwedge_{1\leq k\leq n}d\Theta(\omega_k),
		\end{split}
	\end{equation*}
	and
	\begin{equation}\label{C}
		\begin{split}
			\bigwedge_{\substack{1\leq i\leq n\\1\leq j\leq m}}|dR_{i,j}|^2=\frac{\prod_{k=1}^{n}r_k^{2m-2}\bigwedge_{1\leq k\leq n}r_kdr_k\bigwedge_{1\leq k\leq n}d\Theta(\omega_k)}{\prod_{k=1}^{n-1}\det[I+(R_k^*\mathscr{A}_k^{-1})(R_k^*\mathscr{A}_k^{-1})^*]}.
		\end{split}
	\end{equation}
	
	Moreover, by  Lemma 6.6.1 in \cite {HKPV}, the matrix's polar decomposition $\widetilde{A}=WD^{1/2}$ yields that
	\begin{equation*}
		\begin{split}
			\bigwedge_{1\leq i,j\leq m}|d\widetilde{A}_{i,j}|^2=f(D)\bigwedge_{1\leq i,j\leq m}dD_{i,j}\bigwedge_{1\leq i, j\leq m}\Omega_{i,j}(W),
		\end{split}
	\end{equation*}
	where $f$ is a smooth function of $D$ and $\Omega(W)=W^*dW$ is a matrix-valued one form.  It also follows from the statement in  Page 101 of \cite{HKPV}  that
	\begin{equation*}
		\begin{split}
			d\mu_{\mathrm{U}(m)}(W)\,\propto\,\bigwedge_{1\leq i, j\leq m}\Omega_{i,j}(W).
		\end{split}
	\end{equation*}
	This concludes that
	\begin{equation}\label{D}
		\begin{split}
			\bigwedge_{1\leq i,j\leq m}|dA_{i,j}|^2\,\propto\,\frac{f(D)\bigwedge_{1\leq i,j\leq m}dD_{i,j}\bigwedge d\mu_{\mathrm{U}(m)}(W)}{\det[I+(R_n^*\mathscr{A}_n^{-1})(R_n^*\mathscr{A}_n^{-1})^*]^m}.
		\end{split}
	\end{equation}
	
	Combining \eqref{B}, \eqref{C}, \eqref{D} with \eqref{A}, we obtain Fact~\ref{measure decomposition for general linear group}.

\textbf{Step II.} For a matrix $M\in GL(n+m)$,
write $M=UP^{1/2}$ in the polar decomposition, where $U$ lies in the unitary group $\mathrm{U}(n+m)$ and $P^{1/2}$ is the positive definite square root of a positive definite matrix $P$. We denote a normal matrix $Q$ by
\begin{equation}\label{N}
	\begin{split}
		Q=
		\begin{bmatrix}
			V^*&0\\
			0&I
		\end{bmatrix}
		P
		\begin{bmatrix}
			V&0\\
			0&I
		\end{bmatrix},
	\end{split}
\end{equation}
and partition it as
\begin{equation}\label{P}
	\begin{split}
		Q=
		\begin{bmatrix}
			Q^{(1)}&Q^{(2)}\\
			Q^{(3)}&Q^{(4)}
		\end{bmatrix},
	\end{split}
\end{equation}
where $Q^{(1)}$ has size $n\times n$. Here $V$ is the matrix appeared in  the matrix's Schur decomposition~\eqref{Schur decomposition}

By further studying for Fact~\ref{measure decomposition for general linear group}, we have the following measure decomposition for Lebesgue measure on the general linear group $\mathrm{GL}(n+m,\mathbb{C})$.

\begin{fact}\label{new measure decomposition for general linear group}
	In the  above notations, the Lebesgue measure on $\mathrm{GL}(n+m,\mathbb{C})$ has the following decomposition:
	\begin{equation*}
		\begin{split}
			\bigwedge_{1\leq i,j\leq n+m}|dM_{i,j}|^2&\,\propto\,\frac{f(D)\prod_{k=1}^{n}r_k^{2m-2}\prod_{1\leq i<j\leq n}|z_i-z_j|^2}{\det(\mathscr{A}_n^*\mathscr{A}_n+R_nR_n^*)^m\prod_{k=1}^{n-1}\det(\mathscr{A}_k^*\mathscr{A}_k+R_kR_k^*)}\\
			&\quad\cdot\bigwedge_{1\leq k\leq n}d\sigma(z_k)\bigwedge_{1\leq k\leq n}d\Theta(\omega_k)\bigwedge d\nu_n(V)\bigwedge d\mu_{\mathrm{U}(m)}(W)\\
			&\quad\cdot\bigwedge_{\substack{1\leq i,j\leq n\\i\neq j}}dQ^{(1)}_{i,j}\bigwedge_{\substack{1\leq i\leq n\\1\leq j\leq m}}dQ^{(2)}_{i,j}\bigwedge_{\substack{1\leq i\leq m\\1\leq j\leq n}}dQ^{(3)}_{i,j}\bigwedge_{1\leq i,j\leq m}dD_{i,j}\bigwedge_{1\leq k\leq n}r_kdr_k.
		\end{split}
	\end{equation*}
\end{fact}

Proof of Fact~\ref{new measure decomposition for general linear group}:
	It is easy to verify that
	\begin{equation}\label{E}
		\begin{split}
			\begin{bmatrix}
				(Z+T)^*&R\\
				S^*&A^*
			\end{bmatrix}
			\begin{bmatrix}
				Z+T&S\\
				R^*&A
			\end{bmatrix}
			=Q.
		\end{split}
	\end{equation}
	This implies that
	\begin{equation*}
		\begin{split}
			(Z+T)^*S+RA=Q^{(2)}.
		\end{split}
	\end{equation*}
	Noticing that
	\begin{equation}\label{H}
		\begin{split}
			A=\big[I+(R_n^*\mathscr{A}_n^{-1})(R_n^*\mathscr{A}_n^{-1})^*\big]^{-1/2}WD^{1/2},
		\end{split}
	\end{equation}
	we have
	\begin{equation}\label{I}
		\begin{split}
			S=\mathscr{A}_n^{*-1}\Big(Q^{(2)}-R_n\big[I+(R_n^*\mathscr{A}_n^{-1})(R_n^*\mathscr{A}_n^{-1})^*\big]^{-1/2}WD^{1/2}\Big).
		\end{split}
	\end{equation}
	Based on \eqref{change by positive definite square root}, one can prove by induction that $R_n$ is depended on $Z$, $T$, $r_1,r_2,\cdots,r_n$, $\omega_1,\omega_2,\cdots,\omega_n$, and $\mathscr{A}_n$ is depended on $Z$, $T$. This implies that
	\begin{equation*}
		\begin{split}
			\bigwedge_{\substack{1\leq i\leq n\\1\leq j\leq m}}dS_{i,j}=\frac{1}{(\det\mathscr{A}_n^*)^m}\bigwedge_{\substack{1\leq i\leq n\\1\leq j\leq m}}dQ^{(2)}_{i,j}+[\cdots],
		\end{split}
	\end{equation*}
	where $[\cdots]$ consists of many terms involving $dz_k,dr_k,d\omega_k$, $1\leq k\leq n$, and $dD_{i,j},dW_{i,j}$, $1\leq i,j\leq m$, as well as $dT_{i,j}$, $1\leq i<j\leq n$.
	Noting that $Q^{(2)*}=Q^{(3)}$, we get
	\begin{equation}\label{F}
		\begin{split}
			\bigwedge_{\substack{1\leq i\leq n\\1\leq j\leq m}}|dS_{i,j}|^2=\frac{1}{\det(\mathscr{A}_n^*\mathscr{A}_n)^m}\bigwedge_{\substack{1\leq i\leq n\\1\leq j\leq m}}dQ^{(2)}_{i,j}\bigwedge_{\substack{1\leq i\leq m\\1\leq j\leq n}}dQ^{(3)}_{i,j}+[\cdots].
		\end{split}
	\end{equation}
	
	For $1\leq k\leq n-1$, denote $k$-tuple columu vectors
	\begin{equation*}
		\begin{split}
			t_{k+1}=\big[T_{1,k+1},T_{2,k+1},\cdots,T_{k,k+1}\big]^{\mathrm{T}},
		\end{split}
	\end{equation*}
	and
	\begin{equation}\label{Z}
		\begin{split}
			q_{k+1}=\big[Q^{(1)}_{1,k+1},Q^{(1)}_{2,k+1},\cdots,Q^{(1)}_{k,k+1}\big]^{\mathrm{T}}.
		\end{split}
	\end{equation}
	It follows from \eqref{E} that
	\begin{equation*}
		\begin{split}
			\mathscr{A}_k^*t_{k+1}+R_k\mathcal{R}_{k+1}^*=q_{k+1}.
		\end{split}
	\end{equation*}
	By the fact that
	\begin{equation}\label{K}
		\begin{split}
			\mathcal{R}_{k+1}=r_{k+1}\omega_{k+1}\big[I+(R_k^*\mathscr{A}_k^{-1})(R_k^*\mathscr{A}_k^{-1})^*\big]^{-1/2},
		\end{split}
	\end{equation}
	we get
	\begin{equation}\label{J}
		\begin{split}
			t_{k+1}=\mathscr{A}_k^{*-1}\Big(q_{k+1}-r_{k+1}R_k\big[I+(R_k^*\mathscr{A}_k^{-1})(R_k^*\mathscr{A}_k^{-1})^*\big]^{-1/2}\omega_{k+1}^*\Big).
		\end{split}
	\end{equation}
	 Using the fact that $R_k$ is depended on $z_1,z_2,\cdots,z_k$, $r_1,r_2,\cdots,r_k$, $\omega_1,\omega_2,\cdots,\omega_k$ and $T_{i,j}$, $1\leq i<j\leq k$, and $\mathscr{A}_k$ is depended on $z_1,z_2,\cdots,z_k$, $T_{i,j}$ again,   we  obtain that
	\begin{equation*}
		\begin{split}
			\bigwedge_{1\leq i\leq k}dT_{i,k+1}=\frac{1}{\det\mathscr{A}_k^*}\bigwedge_{1\leq i\leq k}dQ^{(1)}_{i,k+1}+[\cdots],
		\end{split}
	\end{equation*}
	where $[\cdots]$ consists of many terms involving $dz_j$, $1\leq j\leq k$, and $dr_j,d\omega_j$, $1\leq j\leq k+1$, as well as $dT_{i,j}$, $1\leq i<j\leq k$. Noting that $Q^{(1)*}=Q^{(1)}$, we get
	\begin{equation}\label{G}
		\begin{split}
			\bigwedge_{1\leq i\leq k}|dT_{i,k+1}|^2=\frac{1}{\det(\mathscr{A}_k^*\mathscr{A}_k)}\bigwedge_{1\leq i\leq k}dQ^{(1)}_{i,k+1}\bigwedge_{1\leq j\leq k}dQ^{(1)}_{k+1,j}+[\cdots].
		\end{split}
	\end{equation}
	
A direct computation shows that for any $1\leq k\leq n$,
	\begin{equation*}
		\begin{split}
			&\quad\,\det\big(\mathscr{A}_k^*\mathscr{A}_k\big)\det\big[I+(R_k^*\mathscr{A}_k^{-1})(R_k^*\mathscr{A}_k^{-1})^*\big]\\
			&=\det\big(\mathscr{A}_k^*\mathscr{A}_k\big)\det\big[I+R_k^*(\mathscr{A}_k^*\mathscr{A}_k)^{-1}R_k\big]\\
			&=\det\big(\mathscr{A}_k^*\mathscr{A}_k\big)\det\big[I+(\mathscr{A}_k^*\mathscr{A}_k)^{-1}R_kR_k^*\big]\\
			&=\det\big(\mathscr{A}_k^*\mathscr{A}_k+R_kR_k^*\big).
		\end{split}
	\end{equation*}
Combing it with \eqref{F}, \eqref{G} and Fact~\ref{measure decomposition for general linear group}, we obtain  	  Fact~\ref{new measure decomposition for general linear group}.

\textbf{Step III.}
Denote
\begin{equation}\label{Q}
	\begin{split}
		G=Q^{(2)*}\big(\mathscr{A}_n^*\mathscr{A}_n\big)^{-1}R_n\big[I+(R_n^*\mathscr{A}_n^{-1})(R_n^*\mathscr{A}_n^{-1})^*\big]^{-1/2}W,
	\end{split}
\end{equation}
and for $1\leq k\leq n-1$,
\begin{equation}\label{R}
	\begin{split}
		\alpha_k=\mathrm{Re}\Big(\omega_{k+1}\big[I+(R_k^*\mathscr{A}_k^{-1})(R_k^*\mathscr{A}_k^{-1})^*\big]^{-1/2}R_k^*\big(\mathscr{A}_k^*\mathscr{A}_k\big)^{-1}q_{k+1}\Big).
	\end{split}
\end{equation}

By further studying for Fact~\ref{new measure decomposition for general linear group}, we have the following measure decomposition for Lebesgue measure on the general linear group $\mathrm{GL}(n+m,\mathbb{C})$.

\begin{fact}\label{new new measure decomposition for general linear group}
	In the   above notations, the Lebesgue measure on $\mathrm{GL}(n+m,\mathbb{C})$ has the following decomposition:
	\begin{equation*}
		\begin{split}
			\bigwedge_{1\leq i,j\leq n+m}|dM_{i,j}|^2&\,\propto\,\frac{f(D)g(G,D)\prod_{k=1}^{n}r_k^{2m-2}\prod_{1\leq i<j\leq n}|z_i-z_j|^2}{\det(\mathscr{A}_n^*\mathscr{A}_n+R_nR_n^*)^m\prod_{k=1}^{n-1}\det(\mathscr{A}_k^*\mathscr{A}_k+R_kR_k^*)\prod_{k=1}^{n-1}(1-\alpha_kr_{k+1}^{-1})}\\
			&\quad\cdot\bigwedge_{1\leq k\leq n}d\sigma(z_k)\bigwedge_{1\leq k\leq n}d\Theta(\omega_k)\bigwedge d\nu_n(V)\bigwedge d\mu_{\mathrm{U}(m)}(W)\bigwedge_{1\leq i,j\leq n+m}dQ_{i,j},
		\end{split}
	\end{equation*}
	where $g$ is a smooth function of $(G,D)$.
\end{fact}

Proof of Fact~\ref{new new measure decomposition for general linear group}:
	It follows from \eqref{E} that
	\begin{equation*}
		\begin{split}
			S^*S+A^*A=Q^{(4)}.
		\end{split}
	\end{equation*}
	Based on \eqref{H} and \eqref{I}, we have
	\begin{equation}\label{S}
		\begin{split}
			D-GD^{1/2}-D^{1/2}G^*+Q^{(2)*}\big(\mathscr{A}_n^*\mathscr{A}_n\big)^{-1}Q^{(2)}=Q^{(4)}.
		\end{split}
	\end{equation}
	This implies that
	\begin{equation}\label{L}
		\begin{split}
			\bigwedge_{1\leq i,j\leq m}dD_{i,j}=g(G,D)\bigwedge_{1\leq i,j\leq m}dQ^{(4)}_{i,j}+[\cdots],
		\end{split}
	\end{equation}
	where $g$ is a smooth function of $(G,D)$. 
Here $[\cdots]$ consists of many terms involving $dz_k,d\omega_k$, $1\leq k\leq n$, $dW_{i,j}$, $1\leq i,j\leq m$, and $dQ^{(2)}_{i,j}$, $1\leq i\leq n$, $1\leq j\leq m$, $dQ^{(1)}_{i,j}$, $1\leq i,j\leq n$, $i\ne j$, as well as $dr_k$, $1\leq k\leq n$.
	
	It follows from \eqref{E} that
	\begin{equation*}
		\begin{split}
			\left\{\begin{array}{l}
				|z_1|^2+\mathcal{R}_1\mathcal{R}_1^*=Q^{(1)}_{1,1},\\
				t_{k+1}^*t_{k+1}+|z_{k+1}|^2+\mathcal{R}_{k+1}\mathcal{R}_{k+1}^*=Q^{(1)}_{k+1,k+1},\quad k=1,2,\cdots,n-1.
			\end{array}\right.
		\end{split}
	\end{equation*}
	Based on \eqref{K} and \eqref{J}, we have
	\begin{equation}\label{T}
		\begin{split}
			\left\{\begin{array}{l}
				r_1^2=Q^{(1)}_{1,1}-|z_1|^2,\\
				r_{k+1}^2-2\alpha_kr_{k+1}+q_{k+1}^*\big(\mathscr{A}_k^*\mathscr{A}_k\big)^{-1}q_{k+1}=Q^{(1)}_{k+1,k+1}-|z_{k+1}|^2,\quad k=1,2,\cdots,n-1.
			\end{array}\right.
		\end{split}
	\end{equation}
	This implies that
	\begin{equation}\label{M}
		\begin{split}
			\left\{\begin{array}{l}
				2r_1dr_1=dQ^{(1)}_{1,1}-\overline{z}_1dz_1-z_1d\overline{z}_1,\\
				2r_{k+1}dr_{k+1}=(1-\alpha_kr_{k+1}^{-1})^{-1}dQ^{(1)}_{k+1,k+1}+[\cdots]_{k+1},\quad k=1,2,\cdots,n-1.
			\end{array}\right.
		\end{split}
	\end{equation}
 Here $[\cdots]_{k+1}$ consists of many terms involving $dz_j$, $1\leq j\leq k$, $d\omega_j$, $1\leq j\leq k+1$, $dQ^{(1)}_{i,j}$, $1\leq i,j\leq k+1$, $i\ne j$, as well as $dr_j$, $1\leq j\leq k$.
	
Combing Fact~\ref{new measure decomposition for general linear group} with	\eqref{L} and \eqref{M}, we obtain    Fact~\ref{new new measure decomposition for general linear group}.

\textbf{Step IV.}
By further studying for Fact~\ref{new new measure decomposition for general linear group}, we have the following measure decomposition for Lebesgue measure on the general linear group $\mathrm{GL}(n+m,\mathbb{C})$.

\begin{fact}\label{new new new measure decomposition for general linear group}
	In the  above notations, the Lebesgue measure on $\mathrm{GL}(n+m,\mathbb{C})$ has the following decomposition:
	\begin{equation*}
		\begin{split}
			\bigwedge_{1\leq i,j\leq n+m}|dM_{i,j}|^2&\,\propto\,\frac{f(D)g(G,D)\prod_{k=1}^{n}r_k^{2m-2}\prod_{1\leq i<j\leq n}|z_i-z_j|^2}{\det(\mathscr{A}_n^*\mathscr{A}_n+R_nR_n^*)^m\prod_{k=1}^{n-1}\det(\mathscr{A}_k^*\mathscr{A}_k+R_kR_k^*)\prod_{k=1}^{n-1}(1-\alpha_kr_{k+1}^{-1})}\\
			&\quad\cdot\bigwedge_{1\leq k\leq n}d\sigma(z_k)\bigwedge_{1\leq k\leq n}d\Theta(\omega_k)\bigwedge d\nu_n(V)\bigwedge d\mu_{\mathrm{U}(m)}(W)\bigwedge_{1\leq i,j\leq n+m}dP_{i,j}.
		\end{split}
	\end{equation*}
\end{fact}

Proof of Fact~\ref{new new new measure decomposition for general linear group}:
	Based on \eqref{N}, we have
	\begin{equation}\label{O}
		\begin{split}
			\bigwedge_{1\leq i,j\leq n+m}dQ_{i,j}=\bigwedge_{1\leq i,j\leq n+m}dP_{i,j}+[\cdots],
		\end{split}
	\end{equation}
	where $[\cdots]$ consists of many terms involving $dV_{i,j}$, $1\leq i, j\leq n$. Notice that the dimension of the submanifold consisted by $n\times n$ unitary matrices with non-negative diagonal elements is $n^2-n$. It follows from Claim~6.3.1 in \cite{HKPV} that for any $1\leq i, j\leq n$,
	\begin{equation*}
		\begin{split}
			d\nu_n(V)\bigwedge dV_{i,j}=0.
		\end{split}
	\end{equation*}
	Hence putting \eqref{O} into Fact~\ref{new new measure decomposition for general linear group}, we obtain Fact~\ref{new new new measure decomposition for general linear group}.

\textbf{Step V.}
We now turn to consider the Haar measure $\mu_{\mathrm{U}(n+m)}$ on the unitary group $\mathrm{U}(n+m)$. We have the following measure decomposition.

\begin{fact}\label{Haar measure decomposition}
	In the above notations, the Haar measure $\mu_{\mathrm{U}(n+m)}$ on $\mathrm{U}(n+m)$ has the following decomposition:
	\begin{equation*}
		\begin{split}
			d\mu_{\mathrm{U}(n+m)}(U)\,\propto\,\prod_{k=1}^{n}(1-|z_k|^2)^{m-1}\prod_{1\leq i<j\leq n}|z_i-z_j|^2\,d\mu_{\mathrm{U}(m)}(W)\,d\nu_n(V)\prod_{k=1}^{n}d\Theta(\omega_k)\prod_{k=1}^{n}d\sigma(z_k).
		\end{split}
	\end{equation*}
\end{fact}

Proof of Fact~\ref{Haar measure decomposition}:
	The matrix's polar decomposition $M=UP^{1/2}$ yields that
	\begin{equation}\label{U}
		\begin{split}
			\bigwedge_{1\leq i,j\leq n+m}|dM_{i,j}|^2=h(P)\,d\mu_{\mathrm{U}(n+m)}(U)\bigwedge_{1\leq i,j\leq m}dP_{i,j},
		\end{split}
	\end{equation}
	where $h$ is a smooth function of $P$.
	
	Based on the polar decomposition $M=UP^{1/2}$, the unitary group $\mathrm{U}(n+m)$ is the submanifold of the general linear group $\mathrm{GL}(n+m,\mathbb{C})$   defined by the equation $P=I_{n+m}$.
	
	By Fact~6.6.3 in \cite{HKPV}, combining \eqref{U} with Fact~\ref{new new new measure decomposition for general linear group}, we obtain
	\begin{equation*}
		\begin{split}
			h(I_{n+m})\,d\mu_{\mathrm{U}(n+m)}(U)&\,\propto\,\frac{f(I_m)g(0_{m\times m},I_m)\prod_{k=1}^{n}(1-|z_k|^2)^{m-1}\prod_{1\leq i<j\leq n}|z_i-z_j|^2}{\det(\mathscr{A}_n^*\mathscr{A}_n+R_nR_n^*)^m\prod_{k=1}^{n-1}\det(\mathscr{A}_k^*\mathscr{A}_k+R_kR_k^*)}\\
			&\quad\cdot\bigwedge_{1\leq k\leq n}d\sigma(z_k)\bigwedge_{1\leq k\leq n}d\Theta(\omega_k)\bigwedge d\nu_n(V)\bigwedge d\mu_{\mathrm{U}(m)}(W),
		\end{split}
	\end{equation*}
	that is
	\begin{equation*}
		\begin{split}
			d\mu_{\mathrm{U}(n+m)}(U)&\,\propto\,\frac{\prod_{k=1}^{n}(1-|z_k|^2)^{m-1}\prod_{1\leq i<j\leq n}|z_i-z_j|^2}{\det(\mathscr{A}_n^*\mathscr{A}_n+R_nR_n^*)^m\prod_{k=1}^{n-1}\det(\mathscr{A}_k^*\mathscr{A}_k+R_kR_k^*)}\\
			&\quad\cdot d\mu_{\mathrm{U}(m)}(W)\,d\nu_n(V)\prod_{k=1}^{n}d\Theta(\omega_k)\prod_{k=1}^{n}d\sigma(z_k).
		\end{split}
	\end{equation*}
Here,   we use the fact $Q=I_{n+m}$,$G=0_{m\times m}$ and $\alpha_k=0$, $k=1,2,\cdots,n-1$, $D=I_m$ and $r_k^2=1-|z_k|^2$, $k=1,2,\cdots,n$, which are implied by $P=I_{n+m}$.

	For any $1\leq k\leq n$, since $Q=I_{n+m}$, it follows from \eqref{E} that
	\begin{equation*}
		\begin{split}
			\mathscr{A}_k^*\mathscr{A}_k+R_kR_k^*=I_k.
		\end{split}
	\end{equation*}
Therefore,
	\begin{equation*}
		\begin{split}
			\det\big(\mathscr{A}_k^*\mathscr{A}_k+R_kR_k^*\big)=1,
		\end{split}
	\end{equation*}
	which implies Fact~\ref{Haar measure decomposition}.

This completes the proof of Lemma~\ref{lemma about Haar measure}.
\end{proof}

We now turn to the Hua-Pickrell measure $\mu_{\mathrm{U}(n+m)}^{(\delta)}$ on the unitary group $\mathrm{U}(n+m)$, where $\mathrm{Re}\,\delta>-1/2$. Let us start with the following Lemma.

\begin{lemma}\label{lemma about unitary matrix}
	For $\begin{bmatrix}
		M_1&M_2\\
		M_3^*&M_4
	\end{bmatrix}\in\mathrm{U}(n+m)$,  suppose the $n\times n$ matrix  $M_1$ salsifies that $M_1$ and $I-M_1$ are invertible. Then the $m\times m$  matrix
	\begin{equation*}
		\begin{split}
			\big[I-M_3^*(I-M_1)^{-1}M_1^{*-1}M_3\big]\big[I+(M_3^*M_1^{-1})(M_3^*M_1^{-1})^*\big]^{-1/2}\in\mathrm{U}(m).
		\end{split}
	\end{equation*}
\end{lemma}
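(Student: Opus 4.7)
My plan is to reduce the claim to a matrix identity involving only $M_1$. Write $A := I - M_3^*(I-M_1)^{-1}M_1^{*-1}M_3$ and $B := [I + NN^*]^{-1/2}$ with $N := M_3^*M_1^{-1}$. Since $B$ is Hermitian positive definite and $AB$ is square, the statement $AB \in \mathrm{U}(m)$ is equivalent to $BA^*AB = I_m$, which after cancelling $B$ becomes $A^*A = B^{-2} = I + NN^*$.

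First I would expand $A^*A$ and fold in unitarity. With $\Phi := (I-M_1)^{-1}M_1^{*-1}$ (so $\Phi^* = M_1^{-1}(I-M_1^*)^{-1}$), direct multiplication gives
\begin{equation*}
A^*A = I - M_3^*(\Phi + \Phi^*)M_3 + M_3^*\Phi^*(M_3M_3^*)\Phi M_3.
\end{equation*}
The $(1,1)$-block of the relation $U^*U = I_{n+m}$ reads $M_1^*M_1 + M_3M_3^* = I_n$, so substituting $M_3M_3^* = I_n - M_1^*M_1$ yields
\begin{equation*}
A^*A = I + M_3^*\bigl[\Phi^*(I - M_1^*M_1)\Phi - \Phi - \Phi^*\bigr]M_3.
\end{equation*}
Since $NN^* = M_3^*M_1^{-1}M_1^{*-1}M_3 = M_3^*(M_1M_1^*)^{-1}M_3$, the entire lemma reduces to the purely algebraic identity $\Phi^*(I - M_1^*M_1)\Phi - \Phi - \Phi^* = (M_1M_1^*)^{-1}$.

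The crux, and the step I expect to be the main obstacle, is proving this identity without using that $M_1$ is normal. The key is the additive decomposition $I - M_1^*M_1 = (I-M_1^*) + M_1^*(I-M_1)$, which telescopes cleanly after sandwiching to give
\begin{equation*}
(I-M_1^*)^{-1}(I-M_1^*M_1)(I-M_1)^{-1} = (I-M_1)^{-1} + (I-M_1^*)^{-1}M_1^*,
\end{equation*}
with no commutation between $M_1$ and $M_1^*$ required. Multiplying by $M_1^{-1}$ on the left and $M_1^{*-1}$ on the right gives $\Phi^*(I-M_1^*M_1)\Phi = M_1^{-1}(I-M_1)^{-1}M_1^{*-1} + M_1^{-1}(I-M_1^*)^{-1}$. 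Subtracting $\Phi+\Phi^* = (I-M_1)^{-1}M_1^{*-1} + M_1^{-1}(I-M_1^*)^{-1}$ cancels two of the four terms and leaves $(M_1^{-1}-I)(I-M_1)^{-1}M_1^{*-1}$. Since $M_1^{-1}$ commutes with $(I-M_1)^{-1}$ (as $M_1$ commutes with $I-M_1$), this collapses to $M_1^{-1}(I-M_1)(I-M_1)^{-1}M_1^{*-1} = (M_1M_1^*)^{-1}$, establishing the identity. A direct expansion that bypasses this decomposition produces terms like $(M_1M_1^*)^{-1} - I$ in positions where $M_1$ and $M_1^*$ fail to commute, which is precisely what obstructs further simplification — so recognizing the splitting of $I - M_1^*M_1$ is the crucial move.
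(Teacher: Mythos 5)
Your proof is correct and takes essentially the same route as the paper: reduce unitarity of the product to the identity $A^*A=I+NN^*$ via positive definiteness of the square-root factor, expand, insert the block-unitarity relation $M_3M_3^*=I-M_1^*M_1$, and collapse the remaining terms algebraically (the paper factors its bracket as $(I-M_1^*)(I-M_1)$ inside one sandwich, while you split $I-M_1^*M_1=(I-M_1^*)+M_1^*(I-M_1)$, but this is the same computation in different bookkeeping). The only blemish is notational: $M_1^{-1}M_1^{*-1}$ equals $(M_1^*M_1)^{-1}$, not $(M_1M_1^*)^{-1}$; since you use the latter symbol consistently for the former quantity on both sides of your reduced identity, the argument is unaffected.
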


\begin{proof}[{\bf Proof of Lemma~\ref{lemma about unitary matrix}}]
	It is enough to prove that
	\begin{equation*}
		\begin{split}
			&\quad\,\,\,I+(M_3^*M_1^{-1})(M_3^*M_1^{-1})^*\\
			&=\big[I-M_3^*(I-M_1)^{-1}M_1^{*-1}M_3\big]^*\big[I-M_3^*(I-M_1)^{-1}M_1^{*-1}M_3\big].
		\end{split}
	\end{equation*}
	Since $\begin{bmatrix}
		M_1&M_2\\
		M_3^*&M_4
	\end{bmatrix}$ is unitary, we have
	\begin{equation*}
		\begin{split}
			M_1^*M_1+M_3M_3^*=I.
		\end{split}
	\end{equation*}
	Therefore,
	\begin{equation*}
		\begin{split}
			\mathrm{RHS}&=\big[I-M_3^*M_1^{-1}(I-M_1^*)^{-1}M_3\big]\big[I-M_3^*(I-M_1)^{-1}M_1^{*-1}M_3\big]\\
			&=I+M_3^*M_1^{-1}(I-M_1^*)^{-1}M_3M_3^*(I-M_1)^{-1}M_1^{*-1}M_3\\
			&\quad-M_3^*M_1^{-1}(I-M_1^*)^{-1}M_3-M_3^*(I-M_1)^{-1}M_1^{*-1}M_3\\
			&=I+M_3^*M_1^{-1}(I-M_1^*)^{-1}\big[M_3M_3^*-M_1^*(I-M_1)-(I-M_1^*)M_1\big](I-M_1)^{-1}M_1^{*-1}M_3\\
			&=I+M_3^*M_1^{-1}(I-M_1^*)^{-1}\big[M_3M_3^*+2M_1^*M_1-M_1^*-M_1\big](I-M_1)^{-1}M_1^{*-1}M_3\\
			&=I+M_3^*M_1^{-1}(I-M_1^*)^{-1}\big[I+M_1^*M_1-M_1^*-M_1\big](I-M_1)^{-1}M_1^{*-1}M_3\\
			&=I+M_3^*M_1^{-1}(I-M_1^*)^{-1}\big[(I-M_1^*)(I-M_1)\big](I-M_1)^{-1}M_1^{*-1}M_3\\
			&=I+M_3^*M_1^{-1}M_1^{*-1}M_3\\
			&=\mathrm{LHS}.
		\end{split}
	\end{equation*}
	This completes the proof of Lemma~\ref{lemma about unitary matrix}.
\end{proof}

With the help of Lemma~\ref{lemma about unitary matrix}, we have the following lemma about the Hua-Pickrell measure $\mu_{\mathrm{U}(n+m)}^{(\delta)}$ in the case of $\mathrm{Re}\,\delta>-1/2$.

\begin{lemma}\label{lemma about Hua-Pickrell measure}
	There exists a probability measure $\mathcal{T}_{(n,m)}^{(\delta)}$ on $\mathscr{T}(n,m)$ and a measurable transformation
	\begin{equation*}
		\begin{split}
			T^{(\delta)}:\big(\mathrm{U}(n+m),\mu_{\mathrm{U}(n+m)}^{(\delta)}\big)\to\big(\mathscr{T}(n,m),\mathcal{T}_{(n,m)}^{(\delta)}\big),
		\end{split}
	\end{equation*}
	which possess the following properties $:$
	
	$(\mathrm{i}).$ there is an $m\times m$ unitary matrix $H(z_1,\cdots,z_n,\omega_1,\cdots,\omega_n)\in\mathrm{U}(m)$ related to $z_1,\cdots,z_n$ and $\omega_1,\cdots,\omega_n$, such that
	\begin{equation*}
		\begin{split}
			&\quad\,\,d\mathcal{T}_{(n,m)}^{(\delta)}(W,V,\omega_1,\cdots,\omega_n,z_1,\cdots,z_n)\\
			&=C_{(n,m)}^{(\delta)}\big|\det(I-H(z_1,\cdots,z_n,\omega_1,\cdots,\omega_n)W)^{\delta}\big|^2\prod_{k=1}^{n}\big|(1-z_k)^{\delta}\big|^2\\
			&\quad\cdot\prod_{k=1}^{n}(1-|z_k|^2)^{m-1}\prod_{1\leq i<j\leq n}|z_i-z_j|^2\,d\mu_{\mathrm{U}(m)}(W)\,d\nu_n(V)\prod_{k=1}^{n}d\Theta(\omega_k)\prod_{k=1}^{n}d\sigma(z_k),
		\end{split}
	\end{equation*}
	where $C_{(n,m)}^{(\delta)}$ is the normalization constant $;$
	
	$(\mathrm{ii}).$ for $\mu_{\mathrm{U}(n+m)}^{(\delta)}$-a.e. $U\in\mathrm{U}(n+m)$,  the set $\{(\mathcal{P}T^{(\delta)}(U))_i:1\leqslant i\leqslant n\} $ is equal to the set $\lambda\big([U_{i,j}]_{1\leq i,j\leq n}\big)$ of  eigenvalues of the matrix $[U_{i,j}]_{1\leq i,j\leq n};$
	
	$(\mathrm{iii}).$ for any integrable function $\varphi\in L^1\big(\mathscr{T}(n,m),\mathcal{T}_{(n,m)}^{(\delta)}\big)$,
	\begin{equation*}
		\begin{split}
			\int_{\mathrm{U}(n+m)}\varphi\circ T^{(\delta)}\,d\mu_{\mathrm{U}(n+m)}^{(\delta)}=\int_{\mathscr{T}(n,m)}\varphi\,d\mathcal{T}_{(n,m)}^{(\delta)}.
		\end{split}
	\end{equation*}
\end{lemma}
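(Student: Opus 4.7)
The plan is to take $T^{(\delta)} = T$, reusing the transformation already built in Lemma~\ref{lemma about Haar measure}, so property (ii) is inherited directly from Lemma~\ref{lemma about Haar measure}(i). For properties (i) and (iii), since $d\mu^{(\delta)}_{\mathrm{U}(n+m)} \propto |\det(I-U)^\delta|^2\, d\mu_{\mathrm{U}(n+m)}$, Lemma~\ref{lemma about Haar measure}(ii) applied to the measure $|\det(I-U)^\delta|^2\, d\mu_{\mathrm{U}(n+m)}$ pushes its mass onto $\mathscr{T}(n,m)$ as $|\det(I-U)^\delta|^2\, d\mathcal{T}_{(n,m)}$ with $U$ read as a function of the coordinates. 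The real task is therefore to compute $\det(I-U)$ in the coordinates $(W,V,\omega_\bullet,z_\bullet)$ and to show the resulting factor matches the formula in (i).

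Since $U = \mathrm{diag}(V,I)\,\tilde U\,\mathrm{diag}(V^*,I)$ with $\tilde U = \begin{bmatrix}\mathscr{A}&S\\R^*&A\end{bmatrix}$ and $\mathscr{A} = Z+T$ upper triangular, one has $\det(I-U) = \det(I-\tilde U)$. A block Schur complement combined with $\det(I-\mathscr{A}) = \prod_{k}(1-z_k)$ gives
$$\det(I-U) \;=\; \prod_{k=1}^{n}(1-z_k)\cdot\det\bigl[(I-A) - R^*(I-\mathscr{A})^{-1}S\bigr].$$
To simplify the second factor I plan to use $\tilde U\tilde U^* = I$ in two ways: (a) $\mathscr{A}R + SA^* = 0$ to substitute $S = -\mathscr{A}R A^{*-1}$; and (b) $R^*R + AA^* = I$, which on the unitary constraint combines with the polar decomposition $A = \Lambda W$ to yield $A^{*-1} = \Lambda^{-1}W$ and $\Lambda^2 = I - R^*R$. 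After substitution and using the identity $(I-\mathscr{A})^{-1}\mathscr{A} = (I-\mathscr{A})^{-1} - I$, the Schur complement collapses into
$$(I-A) - R^*(I-\mathscr{A})^{-1}S \;=\; I - HW,\qquad H \,:=\, \bigl[I - R^*(I-\mathscr{A})^{-1}R\bigr]\Lambda^{-1}.$$
Since $\mathscr{A}$, $R$, and $\Lambda$ all depend only on $(z_\bullet,\omega_\bullet)$ on the unitary constraint (by the recursion for $T$ from Step II), so does $H$.

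The main obstacle is verifying $H \in \mathrm{U}(m)$. Setting $L = I - R^*(I-\mathscr{A})^{-1}R$ so that $H = L\Lambda^{-1}$, this amounts to $L^*L = I - R^*R$. Expanding $L^*L$ and using $RR^* = I - \mathscr{A}^*\mathscr{A}$ (from $\tilde U^*\tilde U = I$), the crucial ingredient is the algebraic identity
$$(I-\mathscr{A}^*)^{-1}(I - \mathscr{A}^*\mathscr{A})(I-\mathscr{A})^{-1} \;=\; (I-\mathscr{A})^{-1} + (I-\mathscr{A}^*)^{-1} - I,$$
which itself follows from the splitting $I - \mathscr{A}^*\mathscr{A} = (I-\mathscr{A}^*) + \mathscr{A}^*(I-\mathscr{A})$. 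Substituting this into the cross-term produces a pairwise cancellation of the $(I-\mathscr{A})^{-1}$ and $(I-\mathscr{A}^*)^{-1}$ contributions, leaving $L^*L = I - R^*R$ as needed. The step is close in spirit to Lemma~\ref{lemma about unitary matrix} and can also be derived from it by a block-matrix rearrangement.

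Combining the pieces yields $|\det(I-U)^\delta|^2 = \prod_{k=1}^n|(1-z_k)^\delta|^2\cdot|\det(I - HW)^\delta|^2$ as functions on $\mathscr{T}(n,m)$. Multiplying this against the Haar decomposition from Lemma~\ref{lemma about Haar measure} and renormalizing gives exactly the density claimed in (i), and property (iii) is then the corresponding change-of-variables identity applied to $\varphi\cdot|\det(I-U)^\delta|^2$.
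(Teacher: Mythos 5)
Your proposal is correct and follows essentially the same route as the paper's proof: reuse the transformation of Lemma~\ref{lemma about Haar measure}, factor $\det(I-U)=\prod_{k=1}^{n}(1-z_k)\cdot\det\bigl[(I-A)-R_n^*(I-\mathscr{A}_n)^{-1}S\bigr]$ by a Schur complement, rewrite the second factor as $\det(I-HW)$ with $H\in\mathrm{U}(m)$ depending only on $(z_1,\cdots,z_n,\omega_1,\cdots,\omega_n)$, and multiply the resulting density against Fact~\ref{Haar measure decomposition} to get (i)--(iii). The only cosmetic difference is that you derive the equivalent expression $H=\bigl[I-R_n^*(I-\mathscr{A}_n)^{-1}R_n\bigr](I-R_n^*R_n)^{-1/2}$ from the unitarity relations $\mathscr{A}_nR_n+SA^*=0$ and $R_n^*R_n+AA^*=I$ and verify $H\in\mathrm{U}(m)$ by a direct algebraic identity, whereas the paper substitutes via its explicit coordinate formulas (with $Q^{(2)}=0$, $D=I$) and invokes Lemma~\ref{lemma about unitary matrix}; the two verifications amount to the same computation.
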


\begin{proof}[{\bf Proof of Lemma~\ref{lemma about Hua-Pickrell measure}}]
We will divide the proof of Lemma~\ref{lemma about Hua-Pickrell measure} into two steps.

\textbf{Step I.}
   We have the following measure decomposition for the Hua-Pickrell measure $\mu_{\mathrm{U}(n+m)}^{(\delta)}$.
	
\begin{fact}\label{Hua-Pickrell measure decomposition}
	In the notations of the proof of Lemma~\ref{lemma about Haar measure}, the Hua-Pickrell measure $\mu_{\mathrm{U}(n+m)}^{(\delta)}$ on $\mathrm{U}(n+m)$ has the following decomposition:
	\begin{equation*}
		\begin{split}
			d\mu_{\mathrm{U}(n+m)}^{(\delta)}(U)&\,\propto\,\left|\det\Big(I-\big[I-R_n^*(I-\mathscr{A}_n)^{-1}\mathscr{A}_n^{*-1}R_n\big]\big[I+(R_n^*\mathscr{A}_n^{-1})(R_n^*\mathscr{A}_n^{-1})^*\big]^{-1/2}W\Big)^{\delta}\right|^2\\
			&\quad\cdot\prod_{k=1}^{n}\big|(1-z_k)^{\delta}\big|^2\prod_{k=1}^{n}(1-|z_k|^2)^{m-1}\prod_{1\leq i<j\leq n}|z_i-z_j|^2\\
			&\quad\cdot d\mu_{\mathrm{U}(m)}(W)\,d\nu_n(V)\prod_{k=1}^{n}d\Theta(\omega_k)\prod_{k=1}^{n}d\sigma(z_k).
		\end{split}
	\end{equation*}
\end{fact}

Proof of Fact~\ref{Hua-Pickrell measure decomposition}:
	Recall that
	\begin{equation}\label{V}
		\begin{split}
			d\mu_{\mathrm{U}(N)}^{(\delta)}(U)\,\propto\,\big|\det(I-U)^{\delta}\big|^2d\mu_{\mathrm{U}(N)}(U).
		\end{split}
	\end{equation}
 Using  \eqref{modified partitioned matrix}, we have
	\begin{equation*}
		\begin{split}
			\det(I-U)&=\det\left(I-
			\begin{bmatrix}
				V&0\\
				0&I
			\end{bmatrix}
			\begin{bmatrix}
				Z+T&S\\
				R^*&A
			\end{bmatrix}
			\begin{bmatrix}
				V^*&0\\
				0&I
			\end{bmatrix}\right)\\
			&=\det
			\begin{bmatrix}
				I-(Z+T)&-S\\
				-R^*&I-A
			\end{bmatrix}\\
			&=\det\big[I-(Z+T)\big]\det\Big(I-A-R^*\big[I-(Z+T)\big]^{-1}S\Big)\\
			&=\det\big[I-A-R_n^*(I-\mathscr{A}_n)^{-1}S\big]\prod_{k=1}^{n}(1-z_k).
		\end{split}
	\end{equation*}
	Noting that $Q^{(2)}=0_{n\times m}$ and $D=I_m$, and by \eqref{H} and \eqref{I}, we get
	\begin{equation*}
		\begin{split}
			&\quad\,\det\big[I-A-R_n^*(I-\mathscr{A}_n)^{-1}S\big]\\
			&=\det\Big(I-\big[I-R_n^*(I-\mathscr{A}_n)^{-1}\mathscr{A}_n^{*-1}R_n\big]\big[I+(R_n^*\mathscr{A}_n^{-1})(R_n^*\mathscr{A}_n^{-1})^*\big]^{-1/2}W\Big).
		\end{split}
	\end{equation*}
	It follows that
	\begin{equation}\label{W}
		\begin{split}
			&\quad\,\,\big|\det(I-U)^{\delta}\big|^2\\
			&=\left|\det\Big(I-\big[I-R_n^*(I-\mathscr{A}_n)^{-1}\mathscr{A}_n^{*-1}R_n\big]\big[I+(R_n^*\mathscr{A}_n^{-1})(R_n^*\mathscr{A}_n^{-1})^*\big]^{-1/2}W\Big)^{\delta}\right|^2\\
			&\quad\cdot\prod_{k=1}^{n}\big|(1-z_k)^{\delta}\big|^2.
		\end{split}
	\end{equation}
	Combining \eqref{V}, \eqref{W} with Fact~\ref{Haar measure decomposition}, we obtain Fact~\ref{Hua-Pickrell measure decomposition}.

\textbf{Step II.}
Denote the $m\times m$ matrix
\begin{equation*}
	\begin{split}
		&\quad\,\,H(z_1,z_2,\cdots,z_n,\omega_1,\omega_2,\cdots,\omega_n)\\
		&=\big[I-R_n^*(I-\mathscr{A}_n)^{-1}\mathscr{A}_n^{*-1}R_n\big]\big[I+(R_n^*\mathscr{A}_n^{-1})(R_n^*\mathscr{A}_n^{-1})^*\big]^{-1/2}.
	\end{split}
\end{equation*}

Since that $\begin{bmatrix}
	Z+T&S\\
	R^*&A
\end{bmatrix}=\begin{bmatrix}
			V^*&0\\
			0&I
		\end{bmatrix} U \begin{bmatrix}
			V&0\\
			0&I
		\end{bmatrix}$ is a unitary matrix,  Lemma~\ref{lemma about unitary matrix}  implies that $H(z_1,z_2,\cdots,z_n,\omega_1,\omega_2,\cdots,\omega_n)\in\mathrm{U}(m)$ immediately.

This completes the proof of Lemma~\ref{lemma about Hua-Pickrell measure}.
\end{proof}

We now turn to prove Theorem~\ref{thm-Eigenvalues of truncations}.

\begin{proof}[{\bf Proof of Theorem~\ref{thm-Eigenvalues of truncations}}]
Let $U$ be a random unitary matrix drawn from the Hua-Pickrell distribution $\mu_{\mathrm{U}(n+m)}^{(\delta)}$ on $\mathrm{U}(n+m)$, where $\mathrm{Re}\,\delta>-1/2$. We are going to derive the density of eigenvalues of the truncated unitary matrix $[U_{i,j}]_{1\leq i,j\leq n }$, i.e. the joint density of vector $(z_1,z_2,\cdots,z_n)$ in uniform random order.

By Fact~\ref{Hua-Pickrell measure decomposition}, all we need is to calculate the integral
\begin{equation}\label{X}
	\begin{split}
		\int_{\mathrm{U}(m)\times(S^{2m-1})^n} \Big|\det\big[I-H(z_1,\cdots,z_n,\omega_1,\cdots,\omega_n)W\big]^{\delta}\Big|^2d\mu_{\mathrm{U}(m)}(W)\prod_{k=1}^{n}d\Theta(\omega_k),
	\end{split}
\end{equation}
where the $m\times m$ unitary matrix $H(z_1,z_2,\cdots,z_n,\omega_1,\omega_2,\cdots,\omega_n)\in\mathrm{U}(m)$ related to $z_1,z_2,\cdots,z_n$ and $\omega_1,\omega_2,\cdots,\omega_n$.

Because the Haar measure on $\mathrm{U}(m)$ is invariant under left multiplication by unitary matrices, the integral
\begin{equation*}
	\begin{split}
		\int_{\mathrm{U}(m)} \Big|\det\big[I-H(z_1,\cdots,z_n,\omega_1,\cdots,\omega_n)W\big]^{\delta}\Big|^2\,d\mu_{\mathrm{U}(m)}(W)
	\end{split}
\end{equation*}
is a constant, independent of $z_1,z_2,\cdots,z_n,\omega_1,\omega_2,\cdots,\omega_n$. And then the integral~\eqref{X} is independent of $z_1,z_2,\cdots,z_n$.

Therefore, we conclude from Fact~\ref{Hua-Pickrell measure decomposition} that the joint density of vector $(z_1,z_2,\cdots,z_n)$ is proportional to
\begin{equation*}
	\begin{split}
		\prod_{k=1}^{n}\big|(1-z_k)^{\delta}\big|^2\prod_{k=1}^{n}(1-|z_k|^2)^{m-1}\prod_{1\leq i<j\leq n}|z_i-z_j|^2\prod_{k=1}^{n}d\sigma(z_k).
	\end{split}
\end{equation*}
Writing that
\begin{equation*}
	\begin{split}
		d\mu^{(m,\delta)}(z)=|(1-z)^{\delta}|^2(1-|z|^2)^{m-1}d\sigma(z),
	\end{split}
\end{equation*}
we can restate that the joint density of the vector $(z_1,z_2,\cdots,z_n)$ is proportional to
\begin{equation}\label{Y}
	\begin{split}
		\prod_{1\leq i<j\leq n}|z_i-z_j|^2\prod_{k=1}^{n}d\mu^{(m,\delta)}(z_k).
	\end{split}
\end{equation}
This is an orthogonal polynomial ensemble by \eqref{OPE}.

It follows from \eqref{Y} that the eigenvalues of the truncated unitary matrix $[U_{i,j}]_{1\leq i,j\leq n }$ form  a determinantal point process $\mathscr{X}_n^{(m,\delta)}$ on the unit disc $\mathbb{D}$ with the kernel
\begin{equation*}
	\begin{split}
		K_n^{(m,\delta)}(z,w)=\sum_{k=0}^{n-1}P_k^{(m,\delta)}(z)\overline{P_k^{(m,\delta)}(w)}
	\end{split}
\end{equation*}
with respect to the reference measure $\mu^{(m,\delta)}$. Here $\{P_k^{(m,\delta)}\}_{k=0}^{\infty}$ is the family of orthonormal polynomials obtained by applying Gram-Schmidt orthogonalization procedure to $\{z^k\}_{k=0}^{\infty}$ in $L^2(\mathbb{D},\mu^{(m,\delta)})$.

This completes the proof of Theorem~\ref{thm-Eigenvalues of truncations}.
\end{proof}

\section{Proof of Theorem~\ref{thm-Limiting point process}}
This section is devoted to the proof of Theorem~\ref{thm-Limiting point process}. Let us denote the reproducing kernel Hilbert space
\begin{equation*}
	\begin{split}
		L_a^2(\mathbb{D},\mu^{(m,\delta)})=\Big\{f:\mathbb{D}\to\mathbb{C}\,\Big|\,f\,\text{is holomorphic and}\,\int_{\mathbb{D}}|f(z)|^2d\mu^{(m,\delta)}(z)<\infty\Big\}.
	\end{split}
\end{equation*}
\begin{lemma}\label{lemma-4}
	The linear subspace  $\mathrm{span}\{z^k:k=0,1,2,\cdots\} $ is dense in $L_a^2(\mathbb{D},\mu^{(m,\delta)})$.
\end{lemma}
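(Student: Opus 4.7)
The plan is the classical dilation argument. For $f \in L_a^2(\mathbb{D}, \mu^{(m,\delta)})$ and $r \in (0, 1)$, set $f_r(z) := f(rz)$. The proof splits into two stages: (a) each $f_r$ lies in the $L^2(\mu^{(m,\delta)})$-closure of $\mathrm{span}\{z^k\}_{k \geq 0}$, and (b) $f_r \to f$ in $L^2(\mu^{(m,\delta)})$ as $r \to 1^{-}$. Together these yield the density claim.

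Stage (a) is straightforward. The dilate $f_r$ extends holomorphically to the larger disc $\{|z| < 1/r\} \supsetneq \overline{\mathbb{D}}$, so its Taylor partial sums converge to $f_r$ uniformly on $\overline{\mathbb{D}}$. The hypothesis $\mathrm{Re}\,\delta > -1/2$ (together with $m \geq 1$) guarantees $\mu^{(m,\delta)}(\mathbb{D}) < \infty$, so uniform convergence on $\overline{\mathbb{D}}$ promotes to $L^2(\mu^{(m,\delta)})$-convergence, placing $f_r$ in the closure of $\mathrm{span}\{z^k\}_{k\ge 0}$.

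Stage (b) is the substantive step. Pointwise convergence $f(rz) \to f(z)$ is immediate from the continuity of $f$. To upgrade this to $L^2$-convergence via the dominated convergence theorem, I would first establish a uniform bound
\[
\sup_{r \in [r_0,\,1)} \|f_r\|_{L^2(\mu^{(m,\delta)})} \leq C \|f\|_{L^2(\mu^{(m,\delta)})}
\]
for some fixed $r_0 \in (0, 1)$ and $C = C(r_0, m, \delta)$. After the substitution $w = rz$, this reduces to comparing the transported weight $|(1 - w/r)^\delta|^2 (1 - |w/r|^2)^{m-1}$ with $|(1 - w)^\delta|^2 (1 - |w|^2)^{m-1}$ on $r \mathbb{D}$. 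The radial factor is handled by the elementary inequality $1 - |w/r|^2 \leq r^{-2}(1 - |w|^2)$, which (since $m \geq 1$) gives $(1 - |w/r|^2)^{m-1} \leq r^{-2(m-1)}(1 - |w|^2)^{m-1}$.

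The main obstacle is controlling the factor $|1 - w/r|^{2\mathrm{Re}\,\delta}$. For $\mathrm{Re}\,\delta \geq 0$ this factor is uniformly bounded on $r\mathbb{D}$, so the argument closes easily. In the regime $-1/2 < \mathrm{Re}\,\delta < 0$, however, $|1 - w/r|^{2\mathrm{Re}\,\delta}$ has an integrable singularity at $w = r$ that moves with $r$, precluding a pointwise comparison with $|1 - w|^{2\mathrm{Re}\,\delta}$. I would handle this case by splitting $r\mathbb{D}$ into the region $\{|w - r| \geq \epsilon(1 - r)\}$, where $|r - w| \asymp |1 - w|$ holds uniformly and the weight comparison is pointwise, and the complementary shrinking neighborhood of $w = r$, whose contribution is controlled uniformly in $r$ by the integrability of $|u|^{2\mathrm{Re}\,\delta}$ near $u = 0$ in the plane (ensured by $\mathrm{Re}\,\delta > -1/2 > -1$). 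Once the uniform bound is in hand, combining pointwise convergence with dominated convergence yields $\|f_r - f\|_{L^2(\mu^{(m,\delta)})} \to 0$, and together with stage (a) this completes the proof.
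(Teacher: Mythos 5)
Your stage (a) is fine, and your uniform-bound computation is plausible, but the decisive step of stage (b) fails as stated. You claim that the bound $\sup_{r}\|f_r\|_{L^2(\mu^{(m,\delta)})}\leq C\|f\|_{L^2(\mu^{(m,\delta)})}$ together with pointwise convergence $f_r\to f$ yields $\|f_r-f\|_{L^2(\mu^{(m,\delta)})}\to 0$ ``by dominated convergence''. Dominated convergence needs a pointwise integrable majorant for $|f_r|^2$ independent of $r$ (in effect a radial maximal-function estimate $\sup_{r}|f_r|\in L^2(\mu^{(m,\delta)})$), and a uniform bound on the norms supplies no such majorant: in $L^2$, pointwise convergence plus uniformly bounded norms gives only weak compactness, and mass may escape to the boundary (moving-bump phenomenon). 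Since $f_r\to f$ uniformly on compact subsets of $\mathbb{D}$, what must actually be ruled out is that $\int_{\{\rho<|z|<1\}}|f_r|^2\,d\mu^{(m,\delta)}$ stays bounded away from $0$ as $r\to1^-$; a bound with a constant $C>1$ does not exclude this. To close the gap you need one of: (i) an honest majorant or uniform smallness of these boundary tails --- which, in your change-of-variables scheme, requires a pointwise evaluation estimate of the type $|f(w)|^2\lesssim (1-|w|)^{-(m+1)}|1-w|^{-2\mathrm{Re}\,\delta}\int_{D(w,(1-|w|)/4)}|f|^2\,d\mu^{(m,\delta)}$ on the shrinking neighborhood of $w=r$; this ingredient is also silently missing from your near-singularity estimate, where integrability of $|u|^{2\mathrm{Re}\,\delta}$ alone does not control $\int|f|^2$ against the weight; or (ii) the sharper statement $\|f_r\|\to\|f\|$, after which a.e. convergence plus convergence of norms gives $L^2$-convergence; or (iii) a density argument on an independently known dense subclass --- but the natural candidate (polynomials, or functions holomorphic past the boundary) is exactly what is being proved, so this route is circular. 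The dilation argument can be completed, but only with these additional estimates, which are the real content and are absent from your sketch.

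For comparison, the paper sidesteps dilations entirely: multiplication by $(1-z)^{\delta}$ is an isometry of $L_a^2(\mathbb{D},\mu^{(m,\delta)})$ onto $L_a^2(\mathbb{D},\mu^{(m)})$, so the claim reduces to showing that $1$ lies in the $L_a^2(\mathbb{D},\mu^{(m)})$-closure of $\mathrm{span}\{(1-z)^{\delta}z^k\}$, since that closure is invariant under multiplication by polynomials, which are dense in the standard weighted Bergman space. For $\mathrm{Re}\,\delta\geq0$ one approximates $(1-rz)^{-\delta}$ uniformly by polynomials and lets $r\to1^-$ using the explicit bound $\bigl|\tfrac{1-z}{1-rz}\bigr|\leq2$ and dominated convergence; for $-1/2<\mathrm{Re}\,\delta<0$ one applies Mergelyan's theorem to $(1-z)^{-\delta}$, which is continuous on $\overline{\mathbb{D}}$. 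This avoids any maximal-function or tail-uniformity issue.
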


\begin{proof}
Let  $L_a^2(\mathbb{D},\mu^{(m)})$ be
 the standard weighted Bergman space
\begin{equation*}
	\begin{split}		L_a^2(\mathbb{D},\mu^{(m)})=\Big\{f:\mathbb{D}\to\mathbb{C}\,\Big|\,f\,\text{is holomorphic and}\,\int_{\mathbb{D}}|f(z)|^2d\mu^{(m)}(z)<\infty\Big\},
	\end{split}
\end{equation*}
where the measure  $d\mu^{(m)}(z)=(1-|z|^2)^{m-1}d\sigma(z).$
It suffices to show that the linear subspace $\mathrm{span}\{(1-z)^{\delta}z^k:k=0,1,2,\cdots\} $ is dense in $L_a^2(\mathbb{D},\mu^{(m)})$, or
\begin{equation}\label{1 in span}
	\begin{split}
		1\in\overline{\mathrm{span}}^{L_a^2(\mathbb{D},\mu^{(m)})}\{(1-z)^{\delta}z^k:k =0,1,2,\cdots\}.
	\end{split}
\end{equation}

In the case $\mathrm{Re}\,\delta\geq0$, one sees that $\sup_{z\in\mathbb{D}}|(1-z)^{\delta}|<\infty$. For any $0<r<1$, since 
there exists a sequence of polynomials $\{p_k^{(r)}\}$  which    converges to $\frac{1}{(1-rz)^{\delta}}$ uniformly on $\overline{\mathbb{D}}$, we have
\begin{equation*}
	\begin{split}
		\Big(\frac{1-z}{1-rz}\Big)^{\delta}\in\overline{\mathrm{span}}^{L_a^2(\mathbb{D},\mu^{(m)})}\{(1-z)^{\delta}z^k:k=0,1,2,\cdots\}.
	\end{split}
\end{equation*}
Note $\big|\frac{1-z}{1-rz}\big|\leq\big|\frac{1-r}{1-rz}\big|+\big|\frac{r-z}{1-rz}\big|\leq 2$. By Lebesgue's dominated convergence theorem, we have
\begin{equation*}
	\begin{split}
		\lim_{r\to1^-}\int_{\mathbb{D}}\Big|\Big(\frac{1-z}{1-rz}\Big)^{\delta}-1\Big|^2d\mu^{(m)}(z)=0.
	\end{split}
\end{equation*}
This implies that \eqref{1 in span}.

In the case $-1/2<\mathrm{Re}\,\delta<0$, $(1-z)^{-\delta}$ is a holomorphic function in the unit disc $\mathbb{D}$ and continuous to the unit circle $\mathbb{T}$. Hence by Mergelyan's theorem, there is a sequence of polynomials $\{q_k\}$ such that $q_k$ converges to $(1-z)^{-\delta}$ uniformly on $\overline{\mathbb{D}}$. Because of
\begin{equation*}
	\begin{split}
		\int_{\mathbb{D}}\big|(1-z)^{\delta}q_k(z)-1\big|^2d\mu^{(m)}(z)&=\int_{\mathbb{D}}\big|q_k(z)-(1-z)^{-\delta}\big|^2\big|(1-z)^{\delta}\big|^2d\mu^{(m)}(z)\\
		&\leq\left\| q_k(z)-(1-z)^{-\delta}\right\|_\infty^2\mu^{(m,\delta)}(\mathbb{D}),
	\end{split}
\end{equation*}
we also obtain \eqref{1 in span}.

This completes the proof of Lemma~\ref{lemma-4}.
\end{proof}

Recall that the correlation kernel of the determinantal point process $\mathscr{X}_n^{(m,\delta)}$ established in Theorem~\ref{thm-Eigenvalues of truncations} is
\begin{equation*}
	\begin{split}
		K_n^{(m,\delta)}(z,w)=\sum_{k=0}^{n-1}P_k^{(m,\delta)}(z)\overline{P_k^{(m,\delta)}(w)},
	\end{split}
\end{equation*}
where $\{P_k^{(m,\delta)}\}_{k=0}^{\infty}$ is the family of orthonormal polynomials obtained by applying Gram-Schmidt orthogonalization procedure to $\{z^k\}_{k=0}^{\infty}$ in $L^2(\mathbb{D},\mu^{(m,\delta)})$. By Lemma~\ref{lemma-4}, the limiting kernel taken by $n\to\infty$ of $K_n^{(m,\delta)}$ is the reproducing kernel of $L_a^2(\mathbb{D},\mu^{(m,\delta)})$.

It is easy to verify $L_a^2(\mathbb{D},\mu^{(m,\delta)})=(1-z)^{-\delta}L_a^2(\mathbb{D},\mu^{(m)})$. Thus by \cite[Propositon~5.20]{PR}, we obtain the following lemma:
\begin{lemma}\label{lemma-5}
	The reproducing kernel of $L_a^2(\mathbb{D},\mu^{(m,\delta)})$ is
	\begin{equation*}
		\begin{split}
			K^{(m,\delta)}(z,w)=\frac{m}{\pi(1-z)^{\delta}(1-z\overline w)^{m+1}(1-\overline w)^{\bar\delta}}.
		\end{split}
	\end{equation*}
\end{lemma}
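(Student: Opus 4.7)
The plan is to reduce the computation to the known weighted Bergman kernel by exploiting the observation, already highlighted in the paragraph preceding the lemma, that multiplication by $(1-z)^{\delta}$ intertwines the two Hilbert spaces. First I would set up the unitary multiplier. Since $(1-z)^{\delta}$ is a well-defined, nonvanishing holomorphic function on $\mathbb{D}$ (using a fixed branch) and the densities satisfy
\begin{equation*}
  d\mu^{(m,\delta)}(z)=\bigl|(1-z)^{\delta}\bigr|^{2}\,d\mu^{(m)}(z),
\end{equation*}
the operator $V\colon L_{a}^{2}(\mathbb{D},\mu^{(m)})\to L_{a}^{2}(\mathbb{D},\mu^{(m,\delta)})$ given by $Vg(z)=(1-z)^{-\delta}g(z)$ is a surjective linear isometry (surjectivity follows from its inverse being multiplication by the holomorphic function $(1-z)^{\delta}$, and the isometry property is an immediate change-of-weight computation).

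Next, I would identify the reproducing kernel of the auxiliary space $L_{a}^{2}(\mathbb{D},\mu^{(m)})$. Since $d\mu^{(m)}(z)=(1-|z|^{2})^{m-1}\,d\sigma(z)$ differs from $d\mu^{[m]}(z)$ only by the constant factor $m/\pi$, the standard weighted Bergman kernel formula recalled in Section~2 gives
\begin{equation*}
  K^{(m)}(z,w)=\frac{m}{\pi(1-z\overline{w})^{m+1}}.
\end{equation*}

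The final step is to apply the transfer principle for reproducing kernels under a multiplier unitary (this is the content of \cite[Proposition~5.20]{PR}): if $M_{\phi}\colon\mathcal{H}_{1}\to\mathcal{H}_{2}$ is a unitary given by multiplication by a function $\phi$ between two reproducing kernel Hilbert spaces of functions on the same set, then $K_{\mathcal{H}_{2}}(z,w)=\phi(z)\overline{\phi(w)}\,K_{\mathcal{H}_{1}}(z,w)$. Taking $\phi(z)=(1-z)^{-\delta}$ and $\mathcal{H}_{1}=L_{a}^{2}(\mathbb{D},\mu^{(m)})$, $\mathcal{H}_{2}=L_{a}^{2}(\mathbb{D},\mu^{(m,\delta)})$, this yields
\begin{equation*}
  K^{(m,\delta)}(z,w)=(1-z)^{-\delta}\,\overline{(1-w)^{-\delta}}\cdot\frac{m}{\pi(1-z\overline{w})^{m+1}}=\frac{m}{\pi(1-z)^{\delta}(1-z\overline{w})^{m+1}(1-\overline{w})^{\bar{\delta}}},
\end{equation*}
which is exactly the claimed formula.

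There is no serious obstacle here: the whole argument is the observation that the lemma is a one-line consequence of the multiplier transform once $V$ is recognized as unitary. The only minor subtlety worth flagging is that the branch of $(1-z)^{\delta}$ must be chosen consistently on $\mathbb{D}$ (which it can be, as $1-z$ is nonvanishing there), so that $(1-z)^{-\delta}$ is a genuine holomorphic inverse multiplier and $\overline{(1-w)^{-\delta}}=(1-\overline{w})^{-\bar{\delta}}$ is the correct conjugated factor in the reproducing kernel identity.
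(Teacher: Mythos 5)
Your proposal is correct and follows essentially the same route as the paper: the paper likewise identifies $L_a^2(\mathbb{D},\mu^{(m,\delta)})=(1-z)^{-\delta}L_a^2(\mathbb{D},\mu^{(m)})$ and invokes \cite[Proposition~5.20]{PR}, the multiplier transfer principle, applied to the kernel $\frac{m}{\pi(1-z\overline w)^{m+1}}$ of $L_a^2(\mathbb{D},\mu^{(m)})$. Your write-up merely spells out the unitarity of the multiplication operator and the branch/conjugation bookkeeping that the paper leaves implicit.
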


It follows from Lemma~\ref{lemma-5} that the limiting point process $\mathscr{X}^{(m,\delta)}$, taken by $n\to\infty$ of $\mathscr{X}_n^{(m,\delta)}$, is the determinantal point process on $\mathbb{D}$ with kernel $K^{(m,\delta)}$ with respect to the reference measure $\mu^{(m,\delta)}$.

Recall that the determinantal point process $\mathscr{X}^{[m]}$ introduced in subsection~\ref{Truncations of Haar unitary matrices}, which is a determinantal point process on the unit disc $\mathbb{D}$ with weighted Bergman kernel
\begin{equation*}
	\begin{split}
		K^{[m]}(z,w)=\frac{1}{(1-z\overline w)^{m+1}}
	\end{split}
\end{equation*}
with respect to the reference measure $d\mu^{[m]}(z)=\frac{m}{\pi}(1-|z|)^{m-1}d\sigma(z)$. It is worth noticing that for any $n\in\mathbb{N_+}$,
\begin{equation*}
	\begin{split}
		\det\big[K^{(m,\delta)}(z_i,z_j)\big]_{1\leq i,j\leq n}\prod_{k=1}^{n}d\mu^{(m,\delta)}(z_k)=\det\big[K^{[m]}(z_i,z_j)\big]_{1\leq i,j\leq n}\prod_{k=1}^{n}d\mu^{[m]}(z_k).
	\end{split}
\end{equation*}
Hence by definition~\eqref{DPP-definition}, for any $\delta\in\mathbb{C}$ satisfying $\mathrm{Re}\,\delta>-1/2$, the determinantal point process $\mathscr{X}^{(m,\delta)}$ is always equal to $\mathscr{X}^{[m]}$, independent of $\delta$.

This completes the proof of Theorem~\ref{thm-Limiting point process}.

\vskip3mm
  {\bf Conflict of Interest Statement}. The authors state that there is no conflict of interest.

\vskip3mm
   {\bf Data Availability Statement}. Data sharing not applicable to this article as no datasets were generated or analyzed during the current study.

\end{document}